\newtheorem{theorem}{Theorem}[section]
\newtheorem*{theorem A}{Theorem A}
\newtheorem*{theorem B}{N\"olker's Theorem}
\newtheorem{observation}{Observation}
\newtheorem{corollary}{Corollary}[section]
\theoremstyle{remark}
\theoremstyle{remark}
\theoremstyle{definition}
\newtheorem{definition}{Definition}
\begin{document}

\begin{frontmatter}
% Write your paper title here
%% use optional labels to link authors explicitly to addresses. The label may be more than one, use comma to separate

%\author[label1,label]{Edy Tri Baskoro${}^1$}
\title{Node-Kayles on Trees}
\author[label1]{Nuttanon Songsuwan}
%\author[label2]{Joe Ryan}
%\author[label3]{Kiki A. Sugeng}

\address[label1]{\small Faculty of Science at Sriracha, Kasetsart University, Sriracha Campus, Thailand

\vspace*{2.5ex} 
{\normalfont nuttanon.son@ku.th}}
%\address[label2]{School of Electrical Engineering and Computer Science,
%The University of Newcatle, Australia}
%\address[label3]{Departments of Mathematics,
%University of Indonesia,
%Depok - Indonesia

%\vspace*{2.5ex} 
% {\normalfont ebaskoro@math.itb.ac.id, joe.ryan@newcastle.edu.au, kiki@sci.ui.ac.id}
% }

\begin{abstract}
Node-Kayles is a well-known impartial combinatorial game played on graphs, where players alternately select a vertex and remove it along with its neighbors. By the Sprague–Grundy theorem, every position of an impartial game corresponds to a non-negative integer called its Grundy value. In this paper, we investigate the Grundy value sequences of $n$-regular trees as well as graphs formed by joining two $n$-regular trees with a path of length $k$. We derive explicit formulas and recursive relations for the associated Grundy value sequences. Furthermore, we prove that these sequences are eventually periodic and determine both their preperiod lengths and their periods.

\let\thefootnote\relax\footnotetext{Received: xx xxxxx 20xx,\quad
  Accepted: xx xxxxx 20xx.\\[3ex]
  }
  
\end{abstract}

\begin{keyword}
% Separate keyword by \sep
Combinatorial game \sep Grundy value \sep Impartial game  \sep Node-Kayles

% Write the classification number
Mathematics Subject Classification : 05C57 \sep 91A05 \sep 91A46

\end{keyword}

\end{frontmatter}

%% Main text
\section{Introduction}
Impartial combinatorial games form a central object of study in combinatorial game theory. They are two-player, perfect-information, finite games without chance, in which the set of legal moves from any position is the same for both players and the winner is determined under the normal-play convention by making the last move \cite{brown2020nimber, conway2000numbers, berlekamp2001winning}. Classic examples include Nim, subtraction and taking-and-breaking games, and various graph-based games such as Node-Kayles \cite{brown2020nimber, berlekamp2001winning,albert2019lessons}. The unifying mathematical framework for such games is provided by the Sprague–Grundy theorem, developed independently by Sprague \cite{sprague1935mathematische} in 1936 and Grundy \cite{grundy1939mathematics} in 1939 and popularized in the work of Conway and of Berlekamp, Conway, and Guy \cite{conway2000numbers, berlekamp2001winning}.

The Sprague–Grundy theorem asserts that every impartial game position $G$ is equivalent, under normal play, to a single Nim heap of some nonnegative integer size, called its Grundy value or Grundy value $\mathcal{G}(G)$. The terminal position has Grundy value $0$, and and a position has Grundy value
$0$ if and only if it is a $\mathcal{P}$-position, i.e., a second-player win with perfect play. The Grundy value of position is defined recursively by the minimum excluded value (mex) rule: if
$O(G)$ is the set of options (positions reachable in one move) from$G$ and $S = \left\{\mathcal{G}(H) : H \in O(G)\right\}$ is the set of Grundy values of these options, then
\begin{equation*}
	\mathcal{G}(G) = \mathrm{mex}(S),
\end{equation*}
where $\mathrm{mex}(S)$ is the least nonnegative integer not in $S$ \cite{brown2020nimber, bodlaender2002kayles}. This maximum-excluded (mex) rule gives a purely combinatorial procedure for computing Grundy values from the game graph.

A second pillar of Sprague–Grundy theory is the behavior of impartial games under disjoint sum. When a game decomposes into independent components $G = G_1 + G_2 + \dots + G_k$ (each move affects exactly one component), the Grundy value of $G$ is the nim-sum (bitwise XOR) of the Grundy values of its components:
\begin{equation*}
	\mathcal{G}(G) = \bigoplus_{i=1}^{k}\mathcal{G}(G_i) = \mathcal{G}(G_1) \oplus \mathcal{G}(G_2) \oplus \dots \oplus \mathcal{G}(G_k).
\end{equation*}
Here $\oplus$ denotes binary addition without carry \cite{brown2020nimber,albert2019lessons,bodlaender2002kayles}. This rule reduces the analysis of complex disjoint unions to that of their connected components and makes Grundy values a powerful algebraic invariant of impartial games.

Node-Kayles, also known as the domination game in some graph-theoretic contexts \cite{duchene2009combinatorial}, is a canonical impartial game played on a simple graph $G = (V,E)$. In the normal version of Node-Kayles, players alternate choosing a vertex $v \in V$; a move removes $v$, all its neighbors $N_{G}(v)$, and all incident edges from the graph, leaving the induced subgraph $G_{v} = G \backslash N_{G}[v]$, where $N_{G}[v] = \{v\} \cup N_{G}(v)$ is the closed neighborhood of $v$. The player who first has no legal move loses. The game is impartial because both players have the same set of moves from any given position, and it is finite under normal play. Node-Kayles has been studied from both complexity-theoretic and structural perspectives: deciding the winner on general graphs is PSPACE-complete \cite{bodlaender2015exact,fleischer2004kayles}, while polynomial-time algorithms exist for various restricted graph classes such as graphs of bounded asteroidal number, cocomparability graphs, cographs, and others \cite{guignard2009compound,bodlaender2002kayles}.

From the Sprague–Grundy viewpoint, each simple graph $G$ has an associated Grundy value $\mathcal{G}(G)$, defined recursively by
\begin{equation*}
	\mathcal{G}(G) = \mathrm{mex}\{\mathcal{G}(G_{v}): v \in V(G)\},
\end{equation*}
where $\mathcal{G}(\emptyset) = 0$, and for a disjoint union $G = H \cup K$,
\begin{equation*}
	\mathcal{G}(G) = \mathcal{G}(H) \oplus \mathcal{G}(K).
\end{equation*}

To illustrate these definitions, consider Node-Kayles on some simple graphs and compute their respective Grundy values. Let $K_n$ denote a complete graph with $n$ vertices. Since the closed neighborhood of complete graph is $V(K_n)$ itself, so the only move leaves the empty graph. Thus
\begin{equation*}
	\mathcal{G}(K_n) = \mathrm{mex}\{\mathcal{G}(\emptyset)\} = \mathrm{mex}\{0\} = 1.
\end{equation*}
for every integer $n$. Let $P_n$ denote a path on $n$ vertices. For $P_1$, the only vertex $v$ has closed neighborhood $N[v] = V(P_1)$, so the only move also leaves the empty graph. Thus
\begin{equation*}
	\mathcal{G}(P_1) = \mathrm{mex}\{\mathcal{G}(\emptyset)\} = \mathrm{mex}\{0\} = 1.
\end{equation*}
For $P_2$, either endpoint has closed neighborhood equal to all of $P_2$, so any move again leaves the empty graph, and
\begin{equation*}
	\mathcal{G}(P_2) = \mathrm{mex}\{\mathcal{G}(\emptyset)\} = 1.
\end{equation*}
For $P_3$ with vertices $v_1,v_2$ and $v_3$, playing at $v_2$ removes all vertices and leaves empty graph (Grundy value of $0$), while playing at $v_1$ or $v_3$ leaves a single isolated vertex $P_1$ (Grundy value of $1$). Hence the set of option Grundy values from $P_3$ is $\{0,1\}$, and
\begin{equation*}
	\mathcal{G}(P_2) = \mathrm{mex}\{0,1\} = 2.
\end{equation*}
If $G$ is a graph in Figure \ref{fig:ex1}, then as seen in Figure \ref{fig:ex2}, the induced subgraphs $G_{v_1}$ and $G_{v_2}$ are isomorphic to $P_1 \cup K_3$, while the induced subgraph $G_{v_4}$ and $G_{v_5}$ are isomorphic to $P_1 \cup P_1$ and $G_{v_3}$ and $G_{v_6}$ are isomorphic to $P_1$ and $P_3$, respectively.
\begin{figure}[!ht]
	\centering
	\begin{tikzpicture}
		\tikzset{enclosed/.style={draw, circle, inner sep=0pt, minimum size=.15cm, fill=black}}
		
		\node[enclosed, label={left, yshift=.2cm: $v_1$}] (1) at (0.75,3.25) {};
		\node[enclosed, label={above, yshift=0cm: $v_3$}] (3) at (2.5,2) {};
		\node[enclosed, label={left, yshift=-.2cm: $v_2$}] (2) at (0.75,0.75) {};
		\node[enclosed, label={left, yshift=.2cm: $v_4$}] (4) at (4.25,3.25) {};
		\node[enclosed, label={above, yshift=0cm: $v_6$}] (6) at (6,2) {};
		\node[enclosed, label={left, yshift=-.2cm: $v_5$}] (5) at (4.25,0.75) {};
		
		\draw (1) -- (3) node[midway, sloped, above] (edge1) {};
		\draw (2) -- (3) node[midway, right] (edge2) {};
		\draw (3) -- (4) node[midway, sloped, above] (edge3) {};
		\draw (3) -- (5) node[midway, sloped, above] (edge4) {};
		\draw (5) -- (4) node[midway, sloped, above] (edge5) {};
		\draw (6) -- (4) node[midway, sloped, above] (edge6) {};
		\draw (6) -- (5) node[midway, sloped, above] (edge7) {};
	\end{tikzpicture}
	\caption{A simple graph $G$ \label{fig:ex1}}
\end{figure}
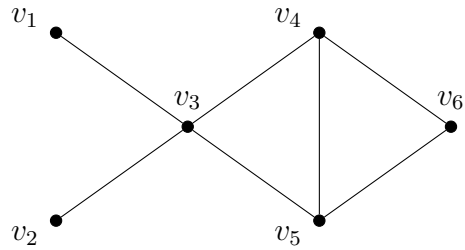
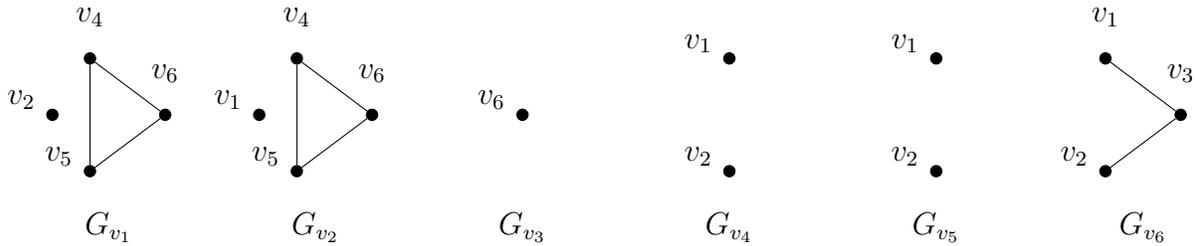
\begin{figure}[!ht]
	\centering
	\begin{tikzpicture}
		\tikzset{enclosed/.style={draw, circle, inner sep=0pt, minimum size=.15cm, fill=black}}
		\node[] (1) at (1.25,0) {$G_{v_1}$};
		\node[] (2) at (4,0) {$G_{v_2}$};
		\node[] (3) at (6.75,0) {$G_{v_3}$};
		\node[] (4) at (9.5,0) {$G_{v_4}$};
		\node[] (5) at (12.25,0) {$G_{v_5}$};
		\node[] (6) at (15,0) {$G_{v_6}$};
		
		\node[enclosed, label={left, yshift=.2cm: $v_2$}] (12) at (0.5,1.5) {};
		\node[enclosed, label={above, yshift=.2cm: $v_4$}] (14) at (1,2.25) {};
		\node[enclosed, label={left, yshift=.2cm: $v_5$}] (15) at (1,0.75) {};
		\node[enclosed, label={above, yshift=.2cm: $v_6$}] (16) at (2,1.5) {};
		\draw (14) -- (15);
		\draw (14) -- (16);
		\draw (16) -- (15);
		
		\node[enclosed, label={left, yshift=.2cm: $v_1$}] (21) at (3.25,1.5) {};
		\node[enclosed, label={above, yshift=.2cm: $v_4$}] (24) at (3.75,2.25) {};
		\node[enclosed, label={left, yshift=.2cm: $v_5$}] (25) at (3.75,0.75) {};
		\node[enclosed, label={above, yshift=.2cm: $v_6$}] (26) at (4.75,1.5) {};
		\draw (24) -- (25);
		\draw (24) -- (26);
		\draw (26) -- (25);
		
		\node[enclosed, label={left, yshift=.2cm: $v_6$}] (36) at (6.75,1.5) {};
		
		\node[enclosed, label={left, yshift=.2cm: $v_1$}] (41) at (9.5,2.25) {};
		\node[enclosed, label={left, yshift=.2cm: $v_2$}] (42) at (9.5,0.75) {};
		
		\node[enclosed, label={left, yshift=.2cm: $v_1$}] (51) at (12.25,2.25) {};
		\node[enclosed, label={left, yshift=.2cm: $v_2$}] (52) at (12.25,0.75) {};
		
		\node[enclosed, label={above, yshift=.2cm: $v_1$}] (61) at (14.5,2.25) {};
		\node[enclosed, label={left, yshift=.2cm: $v_2$}] (62) at (14.5,0.75) {};
		\node[enclosed, label={above, yshift=.2cm: $v_3$}] (63) at (15.5,1.5) {};
		\draw (61) -- (63);
		\draw (62) -- (63);
	\end{tikzpicture}
	\caption{The induced subgraphs $G_{v_1}, G_{v_2}, G_{v_3}, G_{v_4}, G_{v_5}$ and $G_{v_6}$ \label{fig:ex2}}
\end{figure}

\noindent Therefore,
\begin{align*}
	\mathcal{G}(G) &= \mathrm{mex}\{\mathcal{G}(G_{v_i}): i = 1,2,3,4,5,6\}\\
	&= \mathrm{mex}\{\mathcal{G}(P_1 \cup K_3), \mathcal{G}(P_1 \cup P_1), \mathcal{G}(P_1), \mathcal{G}(P_3)\}\\
	&= \mathrm{mex}\{\mathcal{G}(P_1) \oplus \mathcal{G}(K_3), \mathcal{G}(P_1) \oplus \mathcal{G}(P_1), \mathcal{G}(P_1), \mathcal{G}(P_3)\}\\
	&= \mathrm{mex}\{1 \oplus 1, 1 \oplus 1, 1, 2\}\\
	&= \mathrm{mex}\{0, 1, 2\} = 3.
\end{align*}
This simple example shows how the $\mathrm{mex}$ rule and disjoint-union rule ($\oplus$) combine to yield Grundy values in practice.

Node-Kayles is a generalization of Kayles \cite{berlekamp2003winning}, independently introduced by Dudeney \cite{dudeney2002canterbury} and Loyd \cite{loyd1976sam}. Playing Node-Kayles on a path is equivalent to a particulat \textit{Take-and-Break} game introduced by Dawson \cite{dawson1935caissa}, and now known as \textit{Dawson's chess}, which corresponds to the octal game $.137$ (see \cite{berlekamp2003winning,conway2000numbers,flammenkamp2000sprague} for more details). This consists of removing $1,2$ or $3$ counters with certain options to shorten or split the heap, see Table \ref{octal}. 
\begin{table}[]\label{octal}
	\centering
	\begin{tabular}{|c|c|l|}
		\hline
		Octal digit & \begin{tabular}[c]{@{}c@{}}Binary\\ (bits)\end{tabular} & \multicolumn{1}{c|}{Allowed actions for a removal of $i$ counters}                                                                                                      \\ \hline
		0           & 000                                                     & No legal move of size $i$ (the player cannot remove exactly $i$ counters).                                                                                              \\ \hline
		1           & 001                                                     & \begin{tabular}[c]{@{}l@{}}Take-all only. The player may remove exactly $i$ counters and \\ the heap becomes empty.\end{tabular}                                        \\ \hline
		2           & 010                                                     & \begin{tabular}[c]{@{}l@{}}Reduce only. The player may remove exactly $i$ counters and leave \\ a single smaller heap (the remainder is kept as one heap).\end{tabular} \\ \hline
		3           & 011                                                     & \begin{tabular}[c]{@{}l@{}}Take-all or Reduce. \\ Both options are permitted for a removal of $i$ counters.\end{tabular}                                                \\ \hline
		4           & 100                                                     & \begin{tabular}[c]{@{}l@{}}Split only. After removing $i$ counters the remainder must be divided \\ into two non‑empty heaps.\end{tabular}                              \\ \hline
		5           & 101                                                     & Take‑all or Split.                                                                                                                                                      \\ \hline
		6           & 110                                                     & Reduce or Split.                                                                                                                                                        \\ \hline
		7           & 111                                                     & Take‑all, Reduce, or Split                                                                                                                                              \\ \hline
	\end{tabular}
	\caption{Interpretation of Octal game code}
\end{table}
In that
sense, $.137$ means: one single vertex can only be removed if it has no outgoing edges (anymore). Two vertices can only be removed if they form one connected component or if after their removal the remaining part of that connected component is still connected. For three vertices we have more possibilities. We are allowed to remove them if the remaining vertices are left in at most two connected components.

\begin{theorem}
	The Grundy value sequence ($\mathcal{G}(P_n):n \in \mathbb{N}$) is eventually periodic sequence with preperiod of length 51 and period 34.
\end{theorem}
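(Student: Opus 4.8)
The plan is to reduce the theorem to the known octal game analysis of Dawson's chess, then confirm the periodicity computationally via the standard periodicity theorem for octal games. First I would recall the identification established in the excerpt: Node-Kayles on the path $P_n$ is equivalent to the octal game $.137$, so the Grundy value sequence $(\mathcal{G}(P_n))$ coincides with the nim-sequence of $.137$. This transfers the problem from graph theory to the well-studied setting of finite octal games, where a powerful periodicity criterion is available.

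The key tool is the \emph{octal game periodicity theorem} (see \cite{berlekamp2003winning,flammenkamp2000sprague}): for a finite octal game with largest code digit corresponding to removals of at most $d$ counters, if the Grundy sequence satisfies
\begin{equation*}
	\mathcal{G}(P_{n+p}) = \mathcal{G}(P_n) \quad \text{for all } n \text{ with } n_0 \le n \le 2n_0 + p + d,
\end{equation*}
then the sequence is eventually periodic with preperiod at most $n_0$ and period (dividing) $p$ for all $n \ge n_0$. In other words, once a candidate preperiod $n_0$ and period $p$ have been guessed, one only needs to verify the periodicity on a finite, explicitly bounded initial segment rather than for all $n$. For $.137$ we have $d = 3$, so the window of indices to check is finite and short.

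So the concrete steps are as follows. First, I would compute $\mathcal{G}(P_n)$ for $n$ up to a modest bound (comfortably past $2 \cdot 51 + 34 + 3 = 139$) using the recursive $\mathrm{mex}$/$\oplus$ rules of the game, equivalently the splitting rules of $.137$: removing one, two, or three counters subject to the take-all/reduce/split options, taking the nim-sum over the resulting heaps. Second, I would inspect the table to read off the candidate preperiod $n_0 = 51$ and candidate period $p = 34$, i.e.\ observe that $\mathcal{G}(P_{n+34}) = \mathcal{G}(P_n)$ begins to hold from $n = 51$ onward. Third, I would invoke the periodicity theorem: verifying the equality on the finite index range $51 \le n \le 139$ suffices to conclude it holds for all $n \ge 51$. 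Finally, I would confirm minimality of the stated parameters by exhibiting that the equality fails at $n = 50$ (so the preperiod is exactly $51$) and that no proper divisor of $34$ serves as a period on the periodic tail (so the period is exactly $34$).

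The main obstacle here is not conceptual but computational bookkeeping: correctly encoding the three move-types of $.137$ and the resulting heap-splittings so that the tabulated Grundy values are accurate, since a single arithmetic slip in the nim-sums would invalidate the periodicity check. Because the periodicity theorem guarantees that a finite verification implies the infinite claim, the entire argument is rigorous once the initial segment is computed correctly; the delicate point is simply ensuring the computation extends far enough (past the bound $2n_0 + p + d$) and that the claimed $n_0$ and $p$ are indeed the minimal values rather than merely valid ones.
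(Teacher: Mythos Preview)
Your approach is correct and is in fact the standard route to this result. Note, however, that the paper does \emph{not} supply its own proof of this theorem: it is stated in the introduction as a known background fact about Dawson's chess (octal game $.137$), with a citation to the literature, a pointer to OEIS A002187, and an accompanying table of values (Table~\ref{Pk}). So there is no paper proof to compare against; your proposal is precisely the argument one would cite to justify the stated theorem.

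It is worth observing that the paper's later original results (Theorems on $T_{\{2\}}\stackrel{k}{\cdot-\cdot}$ and $T_{\{2\}}\stackrel{k}{\cdot-\cdot}T_{\{2\}}$) are proved by a direct analogue of the periodicity-theorem idea you invoke: compute enough initial values to see the period, then show by induction that once both constituent sequences in the $\mathrm{mex}$ recursion are past their preperiods, a shift by $34$ leaves the option set unchanged. Your use of the packaged octal-game periodicity theorem is simply the classical, cleanly stated version of that same mechanism specialised to heap games, so methodologically you are in complete alignment with the spirit of the paper's later arguments.
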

The periodic part of the sequence for $n \equiv 0,1,2 \dots ,33 (\mod 34)$ are
\begin{equation*}
	8,1,1,2,0,3,1,1,0,3,3,2,2,4,4,5,5,9,3,3,0,1,1,3,0,2,1,1,0,4,5,3,7,4.
\end{equation*}
The Grundy value sequence ($\mathcal{G}(P_n): 0\le n \le 441$) can be seen in Table \ref{Pk} and coincides with OEIS sequence \href{https://oeis.org/A002187}{A002187}. This analysis has been extended to a wide range of Node-Kayles graph families, including powers and variants of paths, lattice graphs, prism graphs, chained and linked cliques, linked cycles and diamonds, hypercubes, and generalized Petersen graphs, often yielding explicit or recursive descriptions of their Grundy value sequences \cite{brown2020nimber, fleischer2004kayles, guignard2009compound}.

In this work we focus on the Grundy value sequences of Node-Kayles on rooted trees and on new graph operations that systematically build larger graphs from rooted components. We introduce a rooted grafting operation that joins two rooted graphs by a path connecting their distinguished roots, producing a new graph. Using the Sprague-Grundy framework, we derive recursive descriptions of the Grundy value of these rooted graftings in terms of the Grundy values of the constituent rooted trees and the length of the connecting path. We then investigate the resulting Grundy value sequences for infinite families of rooted trees $\left(T_{\{n\}}: n \in \mathbb{N}\right)$ and $\left(T_{\{n,m\}}: n,m \in \mathbb{N}\right)$ in Section \ref{sec2}. We then present Grundy value sequence on the rooted graftings $\left(T_{\{m\}} \stackrel{k}{\cdot - \cdot} T_{\{n\}} : k,n,m \in \mathbb{N}\right)$ through the computation, and we prove that all of these sequences are eventually periodic in Section \ref{sec3}. Our results thus provide new structured examples where periodicity of Grundy value sequences can be established rigorously, extending the catalog of Node-Kayles graph families with known Grundy value behavior and contributing evidence and techniques relevant to periodicity questions in octal and graph-based impartial games.

\section{Node-Kayles on rooted tree}\label{sec2}
In this section, we investigate the Grundy value associated with Node-Kayles played on $n$-regular trees.
\begin{definition}
	The $n$-regular tree $T_{\{a_i\}_{i=0}^{n}}$ is a rooted tree of depth $n+1$, where each vertex at level $i$ (with the root at level $0$) has exactly $a_{i}$ children, for $i = 0,1,\dots, n$. The branching factor at each level is determined by the sequence $\{a_0,a_1,\dots,a_n\}$.
\end{definition}

\begin{theorem}
The Grundy value sequence $\left(\mathcal{G}\left(T_{\{n\}}\right) : n \in \mathbb{N}\right)$ are given by the following:
\begin{equation*}
	\mathcal{G}\left(T_{\{n\}}\right) = \begin{cases}
		1,	\quad \text{if $n$ is odd};\\
		2,	\quad \text{if $n$ is even}
	\end{cases}
\end{equation*}
for all $n \in \mathbb{N}$
\end{theorem}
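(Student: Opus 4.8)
The plan is to recognize that the one-parameter family $T_{\{n\}}$ is precisely the star $K_{1,n}$: a root (center) vertex $c$ at level $0$ together with its $n$ leaf-children at level $1$, which are pairwise nonadjacent. Since every vertex of the star is either the center or a leaf, the mex computation reduces to analyzing just two classes of moves, and the only quantity that varies with $n$ turns out to be the parity of $n$.

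First I would evaluate the two move types. Playing at the center $c$ has closed neighborhood $N[c] = V\!\left(T_{\{n\}}\right)$, so this move deletes the entire graph and leaves the empty position, whose Grundy value is $0$. Playing at any leaf $\ell_i$ has closed neighborhood $N[\ell_i] = \{\ell_i, c\}$; removing these two vertices detaches all remaining leaves, since each was adjacent only to $c$. The resulting position is therefore the disjoint union of $n-1$ isolated vertices, that is, $n-1$ copies of $P_1$. By symmetry every leaf move yields the same position, so a single computation covers all of them.

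Next I would apply the disjoint-sum rule together with $\mathcal{G}(P_1)=1$ to obtain $\mathcal{G}\!\left((n-1)\,P_1\right) = \bigoplus_{i=1}^{n-1} 1$, the nim-sum of $n-1$ ones. This nim-sum equals $0$ when $n-1$ is even and $1$ when $n-1$ is odd, i.e. it is $0$ for odd $n$ and $1$ for even $n$. Collecting the option values then gives the set $S = \{0\}$ when $n$ is odd and $S = \{0,1\}$ when $n$ is even, whence $\mathrm{mex}(S) = 1$ and $\mathrm{mex}(S) = 2$ respectively, which is exactly the claimed formula.

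The argument has no genuine obstacle; the only point requiring care is the parity bookkeeping for the nim-sum of identical heaps, together with the observation that the center move always contributes the value $0$ to $S$, guaranteeing $\mathcal{G}\ge 1$ in every case. A minor boundary check at the smallest value $n=1$, where $K_{1,1}=P_2$ and the leaf move leaves the empty graph, confirms that the two cases meet correctly and that the odd case indeed yields $\mathcal{G}=1$.
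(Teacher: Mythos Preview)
Your argument is correct and is essentially the same as the paper's proof: you identify the two types of moves (center versus leaf), observe that the center move yields the empty graph with Grundy value $0$ while a leaf move yields $n-1$ isolated vertices with Grundy value $\bigoplus_{i=1}^{n-1} 1$, and then take the mex according to the parity of $n$. The paper presents this more tersely as a single mex formula, while you spell out the graph structure as $K_{1,n}$ and add the $n=1$ sanity check, but the underlying reasoning is identical.
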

\begin{proof}
	By definition,
	\begin{equation*}
		\mathcal{G}\left(T_{\{n\}}\right) = \mathrm{mex}\left\{\mathcal{G}\left(\emptyset\right), \bigoplus_{i=1}^{n-1}\mathcal{G}\left(\cdot\right)\right\}
	\end{equation*}
	with $\mathcal{G}\left(\emptyset\right) = 0$ and $\mathcal{G}\left(\cdot\right) = 1$. By the definition of disjoint union,
	\begin{equation*}
		\bigoplus_{i=1}^{n-1}\mathcal{G}\left(\cdot\right) = \begin{cases}
			0,	\quad \text{if $n$ is odd};\\
			1,	\quad \text{if $n$ is even}.
		\end{cases}
	\end{equation*}
	Therefore, 
	\begin{equation*}
		\mathcal{G}\left(T_{\{n\}}\right) = \begin{cases}
			1,	\quad \text{if $n$ is odd};\\
			2,	\quad \text{if $n$ is even.}
		\end{cases}
	\end{equation*}
\end{proof}
\begin{theorem}\label{Tnm}
	The Grundy value sequence $\left( \mathcal{G}\left(T_{\{n,m\}}\right) : n,m \in \mathbb{N} \right)$ are given by the following:
	\begin{equation*}
		\mathcal{G}\left(T_{\{n,m\}}\right) = \begin{cases}
			1,	\quad \text{if $m$ is even};\\
			2,	\quad \text{if $n,m$ are odd};\\
			3,	\quad \text{if $n$ is even and $m$ is odd}
		\end{cases}
	\end{equation*}
	for all $n \in \mathbb{N}$
\end{theorem}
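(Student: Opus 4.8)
The plan is to compute $\mathcal{G}\!\left(T_{\{n,m\}}\right)$ directly from the $\mathrm{mex}$ rule, exploiting the symmetry of the two-level tree. Recall that $T_{\{n,m\}}$ consists of a root $r$ adjacent to $n$ children $c_1,\dots,c_n$, where each $c_j$ in turn carries $m$ leaf-children. Since the automorphism group acts transitively on the children, and on the leaves below any fixed child, there are only three moves up to isomorphism, so the option set feeding the $\mathrm{mex}$ has at most three values. First I would identify the three resulting positions. Playing at the root deletes $N[r]=\{r,c_1,\dots,c_n\}$ and leaves the $nm$ former leaves as isolated vertices, i.e.\ a disjoint union of $nm$ copies of $P_1$. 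Playing at a child $c_j$ deletes $c_j$, the root, and the $m$ leaves beneath $c_j$, leaving the remaining $n-1$ children each still attached to its own $m$ leaves; this is a disjoint union of $n-1$ stars $T_{\{m\}}$. Playing at a leaf deletes that leaf together with its parent $c_j$, leaving $T_{\{n-1,m\}}$ together with the $m-1$ orphaned siblings as isolated vertices.

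Next I would evaluate the Grundy value of each option by the disjoint-union rule together with the previously established values of $\mathcal{G}\!\left(T_{\{m\}}\right)$. The root option has value $nm \bmod 2$. The child option has value $\bigoplus_{i=1}^{n-1}\mathcal{G}\!\left(T_{\{m\}}\right)$, which I evaluate by substituting $\mathcal{G}\!\left(T_{\{m\}}\right)=1$ when $m$ is odd and $=2$ when $m$ is even. The leaf option has value $\mathcal{G}\!\left(T_{\{n-1,m\}}\right)\oplus\bigl((m-1)\bmod 2\bigr)$, which crucially refers back to the same family with $n$ decreased by one. Writing $g(n,m)=\mathcal{G}\!\left(T_{\{n,m\}}\right)$, this yields the recurrence $g(n,m)=\mathrm{mex}\bigl\{\,nm\bmod 2,\ \bigoplus_{i=1}^{n-1}\mathcal{G}(T_{\{m\}}),\ g(n-1,m)\oplus((m-1)\bmod 2)\,\bigr\}$, with the degenerate base value $g(0,m)=\mathcal{G}(P_1)=1$ (the root with no children).

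Finally I would prove the claimed closed form by induction on $n$ for fixed $m$, splitting on the parity of $m$. When $m$ is even, the root option is $0$, the leaf option is $g(n-1,m)\oplus 1 = 1\oplus 1 = 0$ by the inductive hypothesis, and the child option is $0$ or $2$ according to the parity of $n$; in either case the $\mathrm{mex}$ is $1$, matching the first line. When $m$ is odd the three options simplify to $n\bmod 2$, $(n-1)\bmod 2$, and $g(n-1,m)$, so the two parities of $n$ feed into one another: the inductive hypothesis $g(n-1,m)=3$ (for $n$ odd) gives $\mathrm{mex}\{1,0,3\}=2$, while $g(n-1,m)=2$ (for $n$ even) gives $\mathrm{mex}\{0,1,2\}=3$, with the base case $n=1$ yielding $\mathrm{mex}\{1,0,1\}=2$. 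I expect the main obstacle to be the bookkeeping in this last, odd-$m$ case: because the value alternates between $2$ and $3$ as $n$ changes parity, the induction is genuinely coupled across the two parity classes, and one must pin down the base case $n=1$ and the empty-nim-sum convention carefully so that the alternation closes up correctly.
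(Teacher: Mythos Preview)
Your proposal is correct and follows essentially the same route as the paper: both identify the three symmetry classes of moves (root, child, leaf), evaluate each option via the disjoint-union rule and the known value of $\mathcal{G}(T_{\{m\}})$, and then run an induction on $n$ split by the parity of $m$. The only cosmetic difference is the anchoring of the induction: the paper starts from $n=1$ via $T_{\{1,m\}}\cong T_{\{m+1\}}$ and separately checks $n=2$ in the odd-$m$ case, whereas you start from $n=0$ with $T_{\{0,m\}}=P_1$, which lets the alternation $2\leftrightarrow 3$ launch cleanly from a single base value.
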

\begin{proof}
	The move of the first player is consist of 3 action
	\begin{itemize}
		\item play at the root: $\bigoplus_{i=1}^{n\cdot m}\mathcal{G}\left(\cdot\right)$
		\item play at the first level: $\bigoplus_{i=1}^{n-1}\mathcal{G}\left(T_{\{m\}}\right)$
		\item play at the second level: $\mathcal{G}\left(T_{\{n-1,m\}}\right) \oplus \bigoplus_{i=1}^{m-1}\mathcal{G}\left(\cdot\right)$
	\end{itemize}
	By the definition,
	\begin{equation*}
		\mathcal{G}\left(T_{\{n,m\}}\right) = \mathrm{mex}\left\{\bigoplus_{i=1}^{n\cdot m}\mathcal{G}\left(\cdot\right), \bigoplus_{i=1}^{n-1}\mathcal{G}\left(T_{\{m\}}\right), \mathcal{G}\left(T_{\{n-1,m\}}\right) \oplus \bigoplus_{i=1}^{m-1}\mathcal{G}\left(\cdot\right)\right\}
	\end{equation*}
	Suppose $m$ is even. It is easy to see that $\mathcal{G}\left(T_{\{1,m\}}\right) = \mathcal{G}\left(T_{\{m+1\}}\right) =  1$, $\bigoplus_{i=1}^{n\cdot m}\mathcal{G}\left(\cdot\right) = 0$ and
	\begin{equation*}
		\bigoplus_{i=1}^{n-1}\mathcal{G}\left(T_{\{m\}}\right) = \begin{cases}
			0,	\quad \text{if $n$ is odd};\\
			2,	\quad \text{if $n$ is even}.
		\end{cases}
	\end{equation*} By using induction,
	\begin{align*}
		\mathcal{G}\left(T_{\{n,m\}}\right) &= \mathrm{mex}\left\{\bigoplus_{i=1}^{n\cdot m}\mathcal{G}\left(\cdot\right), \bigoplus_{i=1}^{n-1}\mathcal{G}\left(T_{\{m\}}\right), \mathcal{G}\left(T_{\{n-1,m\}}\right) \oplus \bigoplus_{i=1}^{m-1}\mathcal{G}\left(\cdot\right)\right\}\\
		&= \mathrm{mex}\left\{0, (0 \text{ or } 2), \mathcal{G}\left(T_{\{n-1,m\}}\right) \oplus  1\right\}\\
		&= \mathrm{mex}\left\{0, (0 \text{ or } 2), 1 \oplus  1\right\}\\
		&= \mathrm{mex}\left\{0, (0 \text{ or } 2)\right\}\\
		&= 1.
	\end{align*}
	If $m$ is odd, then $\mathcal{G}\left(T_{\{1,m\}}\right) = \mathcal{G}\left(T_{\{m+1\}}\right) =  2$,
	\begin{equation*}
			\bigoplus_{i=1}^{n\cdot m}\mathcal{G}\left(\cdot\right) = \begin{cases}
				0,	\quad \text{if $n$ is even};\\
				1,	\quad \text{if $n$ is odd}
			\end{cases}
	\end{equation*}
	and
	\begin{equation*}
		\bigoplus_{i=1}^{n-1}\mathcal{G}\left(T_{\{m\}}\right) = \begin{cases}
			0,	\quad \text{if $n$ is odd};\\
			1,	\quad \text{if $n$ is even}.
		\end{cases}
	\end{equation*}
	We left to show that $\mathcal{G}\left(T_{\{2,m\}}\right) =  3$. The Grundy value of $\mathcal{G}\left(T_{\{2,m\}}\right)$ is
	\begin{align*}
		\mathcal{G}\left(T_{\{2,m\}}\right) &= \mathrm{mex}\left\{\bigoplus_{i=1}^{2 m}\mathcal{G}\left(\cdot\right), \mathcal{G}\left(T_{\{m\}}\right), \mathcal{G}\left(T_{\{1,m\}}\right) \oplus \bigoplus_{i=1}^{m-1}\mathcal{G}\left(\cdot\right)\right\}\\
		&= \mathrm{mex}\left\{0, 1, 2 \oplus 0\right\}\\
		&= \mathrm{mex}\left\{0, 1, 2 \right\}\\
		&= 3
	\end{align*}
	We are ready to finish our proof by strong induction. Assume that for some integer $n \ge 2$, 
	\begin{equation*}
		\mathcal{G}\left(T_{\{i,m\}}\right) = \begin{cases}
			2,	\quad \text{if $i$ are odd};\\
			3,	\quad \text{if $i$ is even}
		\end{cases}
	\end{equation*}
	for all integers $1 \le i < n$. If $n$ is odd, we get
	\begin{align*}
		\mathcal{G}\left(T_{\{n,m\}}\right) &= \mathrm{mex}\left\{\bigoplus_{i=1}^{n\cdot m}\mathcal{G}\left(\cdot\right), \bigoplus_{i=1}^{n-1}\mathcal{G}\left(T_{\{m\}}\right), \mathcal{G}\left(T_{\{n-1,m\}}\right) \oplus \bigoplus_{i=1}^{m-1}\mathcal{G}\left(\cdot\right)\right\}\\
		&= \mathrm{mex}\left\{1,0,3 \oplus 0\right\}\\
		&= \mathrm{mex}\left\{0,1,3\right\}\\
		&= 2
	\end{align*}
	In the similar way for $n$ is even,
	\begin{align*}
		\mathcal{G}\left(T_{\{n,m\}}\right) &= \mathrm{mex}\left\{\bigoplus_{i=1}^{n\cdot m}\mathcal{G}\left(\cdot\right), \bigoplus_{i=1}^{n-1}\mathcal{G}\left(T_{\{m\}}\right), \mathcal{G}\left(T_{\{n-1,m\}}\right) \oplus \bigoplus_{i=1}^{m-1}\mathcal{G}\left(\cdot\right)\right\}\\
		&= \mathrm{mex}\left\{0,1,2 \oplus 0\right\}\\
		&= \mathrm{mex}\left\{0,1,2\right\}\\
		&= 3
	\end{align*}
\end{proof}

The technique in the proof of Theorem \ref{Tnm} is to recursively compute the Grundy values of induced subgraphs. This technique is efficient since the combination of the graph of the same families with the one that we already known their Grundy value. Unfortunately, many induced subgraphs of $n$-regular trees $T_{\{a_i\}_{i=0}^{n}}$ are not a $n$-regular trees when $n$ is greater than $2$. As a result, this method is far too tedious to be applied to larger graphs. For example, to determine the Grundy value of the Node-Kayles game on $T_{\{2,2,3\}}$ we need to first find the Grundy value of the following graph
\begin{equation*}
	\mathcal{G}\left(\vcenter{\hbox{\begin{tikzpicture}
		\tikzset{enclosed/.style={draw, circle, inner sep=0pt, minimum size=.15cm, fill=black}}
		\node[enclosed] (4) at (0.2,0.5) {};
		\node[enclosed] (5) at (0.8,0.5) {};
		\node[enclosed] (6) at (1.4,0.5) {};
		\node[enclosed] (8) at (2,0.5) {};
		
		\node[enclosed] (42) at (0.8,0) {};
		\node[enclosed] (22) at (0.2,0) {};
		\node[enclosed] (21) at (0,0) {};
		\node[enclosed] (23) at (0.4,0) {};
		\node[enclosed] (41) at (0.6,0) {};
		\node[enclosed] (43) at (1,0) {};
		\node[enclosed] (62) at (1.4,0) {};
		\node[enclosed] (82) at (2,0) {};
		\node[enclosed] (81) at (1.8,0) {};
		\node[enclosed] (83) at (2.2,0) {};
		\node[enclosed] (61) at (1.2,0) {};
		\node[enclosed] (63) at (1.6,0) {};
		
		\draw (4) -- (22);
		\draw (4) -- (21);
		\draw (4) -- (23);
		\draw (5) -- (41);
		\draw (5) -- (42);
		\draw (5) -- (43);
		\draw (6) -- (62);
		\draw (6) -- (61);
		\draw (6) -- (63);
		\draw (8) -- (81);
		\draw (8) -- (82);
		\draw (8) -- (83);
	\end{tikzpicture}}}
	\right), \quad \mathcal{G}\left(\vcenter{\hbox{\begin{tikzpicture}
		\tikzset{enclosed/.style={draw, circle, inner sep=0pt, minimum size=.15cm, fill=black}}
		\node[enclosed] (3) at (1.7,1) {};
		\node[enclosed] (6) at (1.4,0.5) {};
		\node[enclosed] (8) at (2,0.5) {};
		
		\node[enclosed] (42) at (0.7,0.5) {};
		\node[enclosed] (22) at (0.3,0.5) {};
		\node[enclosed] (21) at (0.7,1) {};
		\node[enclosed] (23) at (0.3,1) {};
		\node[enclosed] (41) at (0.3,0) {};
		\node[enclosed] (43) at (0.7,0) {};
		\node[enclosed] (62) at (1.4,0) {};
		\node[enclosed] (82) at (2,0) {};
		\node[enclosed] (81) at (1.8,0) {};
		\node[enclosed] (83) at (2.2,0) {};
		\node[enclosed] (61) at (1.2,0) {};
		\node[enclosed] (63) at (1.6,0) {};
		
		\draw (3) -- (6);
		\draw (3) -- (8);
		\draw (6) -- (62);
		\draw (6) -- (61);
		\draw (6) -- (63);
		\draw (8) -- (81);
		\draw (8) -- (82);
		\draw (8) -- (83);
	\end{tikzpicture}}}
	\right), \quad \mathcal{G}\left(\vcenter{\hbox{\begin{tikzpicture}
				\tikzset{enclosed/.style={draw, circle, inner sep=0pt, minimum size=.15cm, fill=black}}
				\node[enclosed] (1) at (1.2,1.5) {};
				\node[enclosed] (3) at (1.7,1) {};
				\node[enclosed] (5) at (0.8,0.5) {};
				\node[enclosed] (6) at (1.4,0.5) {};
				\node[enclosed] (8) at (2,0.5) {};
				
				\node[enclosed] (42) at (0.8,0) {};
				\node[enclosed] (41) at (0.6,0) {};
				\node[enclosed] (43) at (1,0) {};
				\node[enclosed] (62) at (1.4,0) {};
				\node[enclosed] (82) at (2,0) {};
				\node[enclosed] (81) at (1.8,0) {};
				\node[enclosed] (83) at (2.2,0) {};
				\node[enclosed] (61) at (1.2,0) {};
				\node[enclosed] (63) at (1.6,0) {};
				
				\draw (1) -- (3);
				\draw (3) -- (6);
				\draw (3) -- (8);
				\draw (5) -- (41);
				\draw (5) -- (42);
				\draw (5) -- (43);
				\draw (6) -- (62);
				\draw (6) -- (61);
				\draw (6) -- (63);
				\draw (8) -- (81);
				\draw (8) -- (82);
				\draw (8) -- (83);
	\end{tikzpicture}}}
	\right), \quad \mathcal{G}\left(\vcenter{\hbox{\begin{tikzpicture}
				\tikzset{enclosed/.style={draw, circle, inner sep=0pt, minimum size=.15cm, fill=black}}
				\node[enclosed] (1) at (1.2,1.5) {};
				\node[enclosed] (2) at (0.8,1) {};
				\node[enclosed] (3) at (1.7,1) {};
				\node[enclosed] (5) at (0.8,0.5) {};
				\node[enclosed] (6) at (1.4,0.5) {};
				\node[enclosed] (8) at (2,0.5) {};
				
				\node[enclosed] (9) at (0.4,0.25) {};
				\node[enclosed] (0) at (0.4,0.75) {};
				
				\node[enclosed] (42) at (0.8,0) {};
				\node[enclosed] (41) at (0.6,0) {};
				\node[enclosed] (43) at (1,0) {};
				\node[enclosed] (62) at (1.4,0) {};
				\node[enclosed] (82) at (2,0) {};
				\node[enclosed] (81) at (1.8,0) {};
				\node[enclosed] (83) at (2.2,0) {};
				\node[enclosed] (61) at (1.2,0) {};
				\node[enclosed] (63) at (1.6,0) {};
				
				\draw (1) -- (2);
				\draw (2) -- (5);
				\draw (1) -- (3);
				\draw (3) -- (6);
				\draw (3) -- (8);
				\draw (5) -- (41);
				\draw (5) -- (42);
				\draw (5) -- (43);
				\draw (6) -- (62);
				\draw (6) -- (61);
				\draw (6) -- (63);
				\draw (8) -- (81);
				\draw (8) -- (82);
				\draw (8) -- (83);
	\end{tikzpicture}}}
	\right)
\end{equation*}
before we find the Grundy value of $T_{\{2,2,3\}}$.
\begin{figure}[!ht]
	\centering
	\begin{tikzpicture}
		\tikzset{enclosed/.style={draw, circle, inner sep=0pt, minimum size=.15cm, fill=black}}
		
		\node[enclosed] (1) at (5,4) {};
		\node[enclosed] (2) at (3,3) {};
		\node[enclosed] (3) at (7,3) {};
		\node[enclosed] (4) at (2,2) {};
		\node[enclosed] (5) at (4,2) {};
		\node[enclosed] (6) at (6,2) {};
		\node[enclosed] (8) at (8,2) {};
		\node[enclosed] (42) at (4,1) {};
		\node[enclosed] (22) at (2,1) {};
		\node[enclosed] (21) at (1.5,1) {};
		\node[enclosed] (23) at (2.5,1) {};
		\node[enclosed] (41) at (3.5,1) {};
		\node[enclosed] (43) at (4.5,1) {};
		\node[enclosed] (62) at (6,1) {};
		\node[enclosed] (82) at (8,1) {};
		\node[enclosed] (81) at (7.5,1) {};
		\node[enclosed] (83) at (8.5,1) {};
		\node[enclosed] (61) at (5.5,1) {};
		\node[enclosed] (63) at (6.5,1) {};
		
		\draw (1) -- (2) node[midway, sloped, above] (edge1) {};
		\draw (1) -- (3) node[midway, right] (edge2) {};
		\draw (2) -- (4) node[midway, sloped, above] (edge3) {};
		\draw (2) -- (5) node[midway, sloped, above] (edge4) {};
		\draw (3) -- (6) node[midway, sloped, above] (edge5) {};
		\draw (3) -- (8) node[midway, sloped, above] (edge6) {};
		\draw (4) -- (22) node[midway, sloped, above] (edge7) {};
		\draw (4) -- (21) node[midway, sloped, above] (edge7) {};
		\draw (4) -- (23) node[midway, sloped, above] (edge7) {};
		\draw (5) -- (41) node[midway, sloped, above] (edge7) {};
		\draw (5) -- (42) node[midway, sloped, above] (edge7) {};
		\draw (5) -- (43) node[midway, sloped, above] (edge7) {};
		\draw (6) -- (62) node[midway, sloped, above] (edge7) {};
		\draw (6) -- (61) node[midway, sloped, above] (edge7) {};
		\draw (6) -- (63) node[midway, sloped, above] (edge7) {};
		\draw (8) -- (81) node[midway, sloped, above] (edge7) {};
		\draw (8) -- (82) node[midway, sloped, above] (edge7) {};
		\draw (8) -- (83) node[midway, sloped, above] (edge7) {};
	\end{tikzpicture}
	\caption{$n$-regular tree $T_{\{2,2,3\}}$ \label{fig:ex4}}
\end{figure}
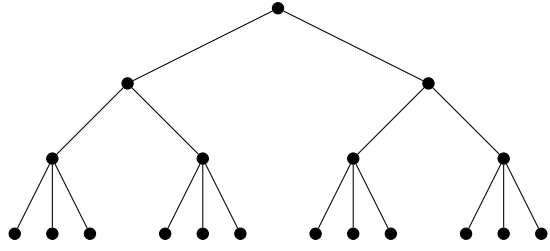

The complexity of this process drastically increases when we try to determine $\mathcal{G}\left(T_{\{a_i\}_{i=0}^{n}}\right)$. Hence, in order to study the Grundy value sequence of the Node-Kayles game on $n$-regular trees in general, we need to adopt another approach.

Despite from it complexity, we can conclude some properties of its Grundy value.
\begin{theorem}
	The Grundy value $\mathcal{G}\left(T_{\{a_i\}_{i=0}^{n}}\right)$ is positive if $a_0$ or $a_1$ is even.
\end{theorem}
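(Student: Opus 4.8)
The plan is to use the standard $\mathcal{P}/\mathcal{N}$ criterion behind the $\mathrm{mex}$ rule: an impartial position has positive Grundy value exactly when some legal move reaches a position of Grundy value $0$. Concretely, to prove $\mathcal{G}\left(T_{\{a_i\}_{i=0}^{n}}\right) > 0$ it suffices to exhibit a single vertex whose removal, together with its closed neighborhood, leaves a residual forest of Grundy value $0$. The only algebraic ingredient I need is that the nim-sum of an \emph{even} number of copies of a fixed value vanishes, since $x \oplus x = 0$; this lets me kill a Grundy value without ever computing it explicitly.

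First I would treat the case where $a_0$ is even by playing at the root $r$. Its closed neighborhood consists of $r$ together with its $a_0$ children at level $1$, so deleting it detaches the subtrees hanging from the level-$2$ vertices. There are exactly $a_0 a_1$ such vertices, and each is the root of a copy of $T_{\{a_2,a_3,\dots,a_n\}}$. Hence the resulting position is a disjoint union of $a_0 a_1$ isomorphic trees with Grundy value $\bigoplus_{i=1}^{a_0 a_1}\mathcal{G}\left(T_{\{a_2,\dots,a_n\}}\right)$. Since $a_0$ is even, $a_0 a_1$ is even and this nim-sum is $0$, so the move certifies positivity.

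Next I would handle the remaining possibility, $a_0$ odd and $a_1$ even (if $a_0$ is even we are already done). As $a_0$ is odd there is at least one level-$1$ vertex $u$, and I would play there. The closed neighborhood of $u$ is $\{u\}$ together with the root and the $a_1$ children of $u$, so its removal leaves two families of components: the $a_0 - 1$ sibling subtrees rooted at the other level-$1$ vertices, each a copy of $T_{\{a_1,\dots,a_n\}}$, and the $a_1 a_2$ subtrees rooted at the level-$3$ vertices lying below $u$, each a copy of $T_{\{a_3,\dots,a_n\}}$. The residual Grundy value is therefore $\bigl(\bigoplus_{i=1}^{a_0-1}\mathcal{G}\left(T_{\{a_1,\dots,a_n\}}\right)\bigr) \oplus \bigl(\bigoplus_{i=1}^{a_1 a_2}\mathcal{G}\left(T_{\{a_3,\dots,a_n\}}\right)\bigr)$. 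Because $a_0$ is odd the count $a_0 - 1$ is even, and because $a_1$ is even the count $a_1 a_2$ is even, so both nim-sums vanish and the whole expression is $0$, again certifying positivity.

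The only place I expect genuine care is the bookkeeping: reading off, after each deletion, precisely how many copies of which smaller regular tree appear in the residual forest, and then checking that the small-depth and degenerate instances do not break the count. For $n = 1$ the level-$1$ children are leaves, so the second family in the $a_0$-odd case is simply absent (an empty nim-sum, equal to $0$); for $a_0 = 0$ the tree degenerates to a single vertex, which is subsumed by the $a_0$-even case with an empty residual forest. In each case the exhibited move produces a residual forest of Grundy value $0$, which completes the argument.
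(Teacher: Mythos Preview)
Your argument is correct, but the paper's proof is shorter because it does not split into cases: playing at the root always leaves exactly $a_0 a_1$ copies of $T_{\{a_2,\dots,a_n\}}$, and $a_0 a_1$ is even as soon as \emph{either} factor is even, so the residual nim-sum vanishes in both situations at once. Your separate treatment of the case $a_0$ odd, $a_1$ even via a level-$1$ move is thus unnecessary extra work---the root move already suffices there---though your analysis of that move is valid and has the minor side benefit of exhibiting a second winning first move. The only cost of your route is the additional bookkeeping with the depth edge cases ($n=1$, $n=2$) that you had to check by hand; the paper's one-move argument avoids these entirely.
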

\begin{proof}
	If the first player choose to play at root of the graph, the result will be the $a_0\cdot a_1$ copies of the graph $T_{\{a_i\}_{i=2}^{n}}$. If $a_0$ or $a_1$ is even, the Grundy value of this induced subgraph will always be $0$. Therefore, the maximum excluded value of this graph will always be positive.
\end{proof}
\section{Rooted grafting of rooted tree}\label{sec3}
In this section, we investigate the Grundy value associated with Node-Kayles played on the graph construct by connected $2$ $1$-regular tree together by a path. 
\begin{definition}
	Let $G$ and $H$ be a rooted graph and $P_{k}$ be a path of length $k$. The \textit{rooted grafting graph} $G \stackrel{k}{\cdot - \cdot} H$ is construct by identifying a root of graph $G$ to one end of the path $P_{k+1}$ and root of graph $H$ to the other end.
\end{definition}

\begin{figure}[h]
	\centering
		\begin{tikzpicture}
			\tikzset{enclosed/.style={draw, circle, inner sep=0pt, minimum size=.15cm, fill=black}}
			\node[] (1) at (1,-1) {$G$};
			\node[] (2) at (4,-1) {$G_{v_2}$};
			\node[] (3) at (11.5,-1) {$G \stackrel{3}{\cdot - \cdot} H$};
			\draw[-Triangle, very thick](5.5, 1) -- (7.5, 1);
			\node[enclosed] (12) at (0,1) {};
			\node[enclosed] (14) at (1,2) {};
			\node[enclosed] (15) at (1,0) {};
			\node[enclosed, label={above, yshift=.2cm: $r_1$}, color=red] (16) at (2,1) {};
			\draw (12) -- (15);
			\draw (12) -- (14);
			\draw (14) -- (16);
			\draw (16) -- (15);
			
			\node[enclosed, label={right, yshift=.2cm: $r_2$}, color=red] (21) at (4,1) {};
			\node[enclosed] (22) at (5,1) {};
			\node[enclosed] (23) at (3.19,2) {};
			\node[enclosed] (24) at (3.19,0) {};
			\draw (21) -- (22);
			\draw (21) -- (23);
			\draw (21) -- (24);
			\draw  [fill={rgb, 255:red, 248; green, 231; blue, 28 }  ,fill opacity=0.4 ] (7.7,0) .. controls (7.7,-0.3) and (8,-0.3) .. (8.3,-0.3) -- (10,-0.3) .. controls (10.3,-0.3) and (10.3,0) .. (10.3,0) -- (10.3,2) .. controls (10.3,2.3) and (10,2.3) .. (10,2.3) -- (8.3,2.3) .. controls (8,2.3) and (7.7,2.3) .. (7.7,2) -- cycle ;
			
			\draw  [fill={rgb, 255:red, 194; green, 194; blue, 194}  ,fill opacity=0.4 ] (12.7,0) .. controls (12.7,-0.3) and (13,-0.3) .. (13.3,-0.3) -- (14,-0.3) .. controls (14.3,-0.3) and (14.3,0) .. (14.3,0) -- (14.3,2) .. controls (14.3,2.3) and (14,2.3) .. (14,2.3) -- (13.3,2.3) .. controls (13,2.3) and (12.7,2.3) .. (12.7,2) -- cycle ;
			\node[enclosed] (32) at (8,1) {};
			\node[enclosed] (34) at (9,2) {};
			\node[enclosed] (35) at (9,0) {};
			\node[enclosed, label={above, yshift=.2cm: $r_1$}, color=red] (36) at (10,1) {};
			\node[enclosed] (37) at (11,1) {};
			\node[enclosed] (38) at (12,1) {};
			\node[enclosed, label={above, yshift=.2cm: $r_2$}, color=red] (39) at (13,1) {};
			\node[enclosed] (30) at (14,1) {};
			\node[enclosed] (31) at (14,2) {};
			\node[enclosed] (40) at (14,0) {};
			\draw (32) -- (35);
			\draw (32) -- (34);
			\draw (34) -- (36);
			\draw (36) -- (35);
			\draw (36) -- (37);
			\draw (37) -- (38);
			\draw (38) -- (39);
			\draw (39) -- (30);
			\draw (39) -- (31);
			\draw (39) -- (40);
		\end{tikzpicture}
		\caption{The rooted grafting graph $G \stackrel{3}{\cdot - \cdot} H$\label{fig:rootedgrafting}}
\end{figure}
Next, we give the observation to on the rooted grafting graph $T_{\{n\}} \stackrel{k}{\cdot - \cdot}$ for the initial case when $n$ or $k$ is equal to $1$.
\begin{observation}
	$\mathcal{G}\left(T_{\{n\}} \stackrel{1}{\cdot - \cdot}\right) = \mathcal{G}\left(T_{\{n+1\}} \right)$
\end{observation}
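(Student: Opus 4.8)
The plan is to recognize that, for $k=1$, the rooted grafting operation simply attaches one additional pendant vertex at the root of $T_{\{n\}}$, and then to identify the resulting graph with $T_{\{n+1\}}$ directly. Recall that $T_{\{n\}}$ is the star whose root $r$ is adjacent to exactly $n$ leaves. A connecting path of length $1$ is a single edge $P_{2}$; identifying one of its endpoints with $r$ contributes no interior vertices and leaves a single new vertex $w$ adjacent to $r$ and to nothing else. Hence in $T_{\{n\}} \stackrel{1}{\cdot - \cdot}$ the root $r$ is adjacent to its original $n$ leaves together with the new leaf $w$, for a total of $n+1$ pendant neighbors.

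First I would make this structural description precise and observe that the graph so obtained is the star $K_{1,n+1}$, which is by definition $T_{\{n+1\}}$. Since the Grundy value of a Node-Kayles position depends only on the isomorphism type of the underlying graph — the mex and disjoint-sum recursion is phrased purely in terms of closed neighborhoods and induced subgraphs — this isomorphism immediately yields $\mathcal{G}(T_{\{n\}} \stackrel{1}{\cdot - \cdot}) = \mathcal{G}(T_{\{n+1\}})$.

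As an alternative, and to keep the argument in the mex style used for the earlier theorems, I would instead compute the option set directly. The legal moves are: playing at the root, which deletes $r$ together with all $n+1$ leaves and leaves $\emptyset$, of Grundy value $0$; and playing at any leaf, which deletes that leaf together with $r$ and leaves $n$ isolated vertices, of Grundy value $\bigoplus_{i=1}^{n}\mathcal{G}(\cdot) = n \bmod 2$. Thus $\mathcal{G}(T_{\{n\}} \stackrel{1}{\cdot - \cdot}) = \mathrm{mex}\{0,\; n \bmod 2\}$, which equals $1$ when $n$ is even and $2$ when $n$ is odd. Comparing with the closed formula for $\mathcal{G}(T_{\{n+1\}})$ established for stars (value $1$ when $n+1$ is odd, i.e.\ $n$ even, and $2$ when $n+1$ is even) gives the claim.

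The only real obstacle here is interpretational rather than mathematical: one must unpack the grafting definition correctly in the degenerate case $k=1$, where the connecting path is a single edge and the far endpoint becomes an ordinary pendant vertex. Once that identification is fixed, the statement reduces either to an immediate graph isomorphism or to the one-line mex computation above, so I do not anticipate any technical difficulty beyond recording the degenerate grafting carefully.
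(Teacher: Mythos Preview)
Your proposal is correct and matches the paper's own treatment: the paper states this as an Observation without proof, relying precisely on the graph isomorphism you spell out (indeed, Table~\ref{extendedgrafting} simply lists $T_{\{n\}} \stackrel{1}{\cdot - \cdot}$ as the graph $T_{\{n+1\}}$). Your alternative mex computation is a nice redundancy but unnecessary once the isomorphism is noted.
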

\begin{observation}
	$\mathcal{G}\left(T_{\{1\}} \stackrel{k}{\cdot - \cdot}\right) = \mathcal{G}\left(P_{k+2} \right)$
\end{observation}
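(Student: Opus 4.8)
The plan is to recognize that the grafted graph $T_{\{1\}} \stackrel{k}{\cdot - \cdot}$ is, as an abstract graph, nothing other than a path on $k+2$ vertices; once this isomorphism is in hand, the equality of Grundy values is immediate, because the Node-Kayles Grundy value of a graph is determined entirely by its isomorphism type (both the $\mathrm{mex}$ recursion and the disjoint-union rule refer only to the combinatorial structure of the graph).

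First I would unwind the two definitions involved. By the definition of an $n$-regular tree, $T_{\{1\}}$ is the rooted tree of depth $1$ whose root $r_1$ has a single child $c$; that is, $T_{\{1\}}$ is just the edge $r_1 c$, a copy of $P_2$. I then unwind the grafting construction in the case where the second rooted graph is a single vertex $r_2$: writing the connecting path of length $k$ (hence with $k$ edges) as $r_1 = p_0, p_1, \dots, p_k = r_2$, the grafting glues $p_0$ to the root $r_1$ of $T_{\{1\}}$ and leaves $p_k = r_2$ at the far end. Reading off the edges, the resulting graph has consecutive vertices
\[
	c,\; r_1 = p_0,\; p_1,\; \dots,\; p_{k-1},\; p_k = r_2,
\]
with each vertex adjacent only to its neighbors in this list.

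The key step is the vertex count: this chain has exactly $1 + (k+1) = k+2$ vertices, precisely two of which ($c$ and $r_2$) have degree $1$ while the rest have degree $2$, so it is isomorphic to $P_{k+2}$ via the map sending $c$ to one endpoint and walking along to $r_2$. Since isomorphic positions have equal Grundy values, this yields $\mathcal{G}\left(T_{\{1\}} \stackrel{k}{\cdot - \cdot}\right) = \mathcal{G}\left(P_{k+2}\right)$, as claimed. I do not expect any substantive obstacle, since the entire content of the observation is this single identification; the only point needing care is the bookkeeping of the connecting path — confirming that a path of length $k$ contributes $k+1$ vertices, so that the grand total is $k+2$ rather than $k+1$ or $k+3$. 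As a consistency check, for $k=1$ Observation 1 with $n=1$ gives $\mathcal{G}\left(T_{\{1\}} \stackrel{1}{\cdot - \cdot}\right) = \mathcal{G}\left(T_{\{2\}}\right)$, and $T_{\{2\}} \cong P_3 = P_{1+2}$, in agreement with the claim.
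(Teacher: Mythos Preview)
Your proposal is correct and is exactly the argument the paper has in mind: the statement is labeled an \emph{Observation} and given no proof precisely because $T_{\{1\}}\stackrel{k}{\cdot-\cdot}$ is, by unwinding the definitions, literally the path $P_{k+2}$. Your bookkeeping of the vertex count and your consistency check against Observation~1 are both fine; there is nothing to add.
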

To determine the Grundy value sequence $\left(\mathcal{G}\left(T_{\{n\}} \stackrel{k}{\cdot - \cdot}\right): n,k \in \mathbb{N}\right)$, we will use the induction to classify the type of the graph.
\begin{theorem}
	For every positive integers $k$ and $n \ge 2$, 
	\begin{align*}
		\mathcal{G}\left(T_{\{n\}} \stackrel{k}{\cdot - \cdot}\right) = \mathcal{G}\left(T_{\{n-2\}} \stackrel{k}{\cdot - \cdot}\right).
	\end{align*}
\end{theorem}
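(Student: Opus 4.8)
The plan is to prove the identity by induction on the path length $k$, uniformly in $n$, by showing that $T_{\{n\}}\stackrel{k}{\cdot-\cdot}$ and $T_{\{n-2\}}\stackrel{k}{\cdot-\cdot}$ present exactly the same set of option Grundy values, so that the two $\mathrm{mex}$ computations, and hence the two Grundy values, coincide. Throughout, write $X_n := T_{\{n\}}\stackrel{k}{\cdot-\cdot}$, the graph consisting of a root $r$ carrying $n$ pendant leaves $\ell_1,\dots,\ell_n$ together with an attached path $r-p_1-p_2-\cdots-p_k$. Every vertex of $X_n$ lies in one of four symmetry classes, the root $r$, a leaf $\ell_i$, the first path vertex $p_1$, or a later path vertex $p_j$ with $2\le j\le k$, so I would first record the position left behind by a move in each class.

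Computing these induced positions, a move at $r$ deletes $N[r]=\{r,\ell_1,\dots,\ell_n,p_1\}$ and leaves $P_{k-1}$; a move at a leaf deletes $\{\ell_i,r\}$ and leaves $n-1$ isolated vertices together with $P_k$; a move at $p_1$ deletes $\{r,p_1,p_2\}$ and leaves $n$ isolated vertices together with $P_{k-2}$; and a move at $p_j$ with $2\le j\le k$ deletes $N[p_j]$, splitting the graph into the part still carrying the root, namely $T_{\{n\}}\stackrel{j-2}{\cdot-\cdot}$, and a residual path $P_{k-j-1}$, under the conventions $T_{\{n\}}\stackrel{0}{\cdot-\cdot}=T_{\{n\}}$ and $P_{-1}=P_0=\emptyset$ (so that the endpoint move $j=k$, where $N[p_k]=\{p_{k-1},p_k\}$, falls into this scheme). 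Using $\mathcal{G}(\cdot)=1$ for an isolated vertex and the disjoint-union rule, the option values of $X_n$ are
\[
\mathcal{G}(P_{k-1}),\quad \Big(\bigoplus_{i=1}^{n-1}\mathcal{G}(\cdot)\Big)\oplus\mathcal{G}(P_k),\quad \Big(\bigoplus_{i=1}^{n}\mathcal{G}(\cdot)\Big)\oplus\mathcal{G}(P_{k-2}),
\]
together with $\mathcal{G}\big(T_{\{n\}}\stackrel{j-2}{\cdot-\cdot}\big)\oplus\mathcal{G}(P_{k-j-1})$ for $2\le j\le k$.

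Next I would compare these with the corresponding values for $X_{n-2}$. The root option $\mathcal{G}(P_{k-1})$ does not involve $n$ at all. The leaf and $p_1$ options involve $n$ only through the nim-sums $\bigoplus_{i=1}^{n-1}\mathcal{G}(\cdot)$ and $\bigoplus_{i=1}^{n}\mathcal{G}(\cdot)$; since deleting two isolated vertices changes such a nim-sum by $1\oplus 1=0$, these values are unchanged when $n$ is replaced by $n-2$. Finally each split option refers to $T_{\{n\}}\stackrel{j-2}{\cdot-\cdot}$ with $j-2\le k-2<k$, so the induction hypothesis gives $\mathcal{G}(T_{\{n\}}\stackrel{j-2}{\cdot-\cdot})=\mathcal{G}(T_{\{n-2\}}\stackrel{j-2}{\cdot-\cdot})$, while the path factor $\mathcal{G}(P_{k-j-1})$ is independent of $n$. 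Thus $X_n$ and $X_{n-2}$ have identical option sets, whence $\mathcal{G}(X_n)=\mathcal{G}(X_{n-2})$. The base of the recursion is path length $0$, where the assertion is exactly $\mathcal{G}(T_{\{n\}})=\mathcal{G}(T_{\{n-2\}})$, which is immediate from the closed form for $\mathcal{G}(T_{\{n\}})$ because $n$ and $n-2$ share the same parity; the observation $\mathcal{G}(T_{\{n\}}\stackrel{1}{\cdot-\cdot})=\mathcal{G}(T_{\{n+1\}})$ likewise settles $k=1$ directly.

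I expect the main obstacle to be the careful, exhaustive treatment of the degenerate positions created by moves near the two ends of the path, namely the root move, the $p_1$ move, the split move at $j=k$ where the residual path collapses to $\emptyset$, and the split move at $j=2$ which produces $T_{\{n\}}\stackrel{0}{\cdot-\cdot}=T_{\{n\}}$ itself. Making the induction well founded requires verifying that each such boundary move lands on a position already covered by the hypothesis, so that the entire argument ultimately rests on the star base case $\mathcal{G}(T_{\{n\}})=\mathcal{G}(T_{\{n-2\}})$; this is precisely where the hypothesis on $n$ is used, and the smallest values of $n$ should be checked by hand against the closed form to confirm the grounding. Once the boundary positions are correctly identified, the parity cancellation for the leaf and $p_1$ options and the appeal to the induction hypothesis for the split options are routine.
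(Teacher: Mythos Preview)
Your argument is correct and follows the same inductive strategy as the paper: both show, by induction on $k$, that the option multisets of $T_{\{n\}}\stackrel{k}{\cdot-\cdot}$ and $T_{\{n-2\}}\stackrel{k}{\cdot-\cdot}$ coincide, using parity invariance of the nim-sums of isolated vertices for the boundary moves and the induction hypothesis for the split moves at shorter path lengths. The only difference is presentational: the paper absorbs your four separate move types (root, leaf, $p_1$, later $p_j$) into a single uniform recursion by formally extending $T_{\{n\}}\stackrel{k}{\cdot-\cdot}$ to the range $-3\le k\le 1$, so that every move is an instance of $\mathcal{G}(P_{i-2})\oplus\mathcal{G}\bigl(T_{\{n\}}\stackrel{k-1-i}{\cdot-\cdot}\bigr)$.
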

\begin{proof}
	Before we proceed with the induction proof, it is convenient to extend the definitions of $T_{\{n\}} \stackrel{k}{\cdot - \cdot}$ and $P_{k}$ to $-3 \le k \le 1$ and determine the corresponding Grundy values as follows in Table \ref{extendedgrafting}.
	 \begin{table}[]
	 	\centering
	 	\begin{tabular}{|c|c|c|c|cc|}
	 		\hline
	 		\multirow{2}{*}{$k$} & \multirow{2}{*}{$P_k$} & \multirow{2}{*}{$\mathcal{G}\left(P_{k+2} \right)$} & \multirow{2}{*}{$T_{\{n\}} \stackrel{k}{\cdot - \cdot}$} & \multicolumn{2}{c|}{$\mathcal{G}\left(T_{\{n\}} \stackrel{k}{\cdot - \cdot}\right)$}                 \\ \cline{5-6} 
	 		&                       &                          &                    & \multicolumn{1}{c|}{$n$ is odd} & $n$ in even \\ \hline
	 		%$k$  & $P_k$                 & $\mathcal{G}\left(P_{k+2} \right)$ & $T_{\{n\}} \stackrel{k}{\cdot - \cdot}$ & $\mathcal{G}\left(T_{\{n\}} \stackrel{k}{\cdot - \cdot}\right)$ \\ \hline
	 		$-3$ & $\emptyset$  & $0$       & $\cdot \times n-1$  &  \multicolumn{1}{c|}{0} & 1    \\ \hline
	 		$-2$ & $\emptyset$  & $0$       & $\emptyset$  & \multicolumn{1}{c|}{0} & 0     \\ \hline
	 		$-1$ & $\emptyset$  & $0$       & $\cdot \times n$  & \multicolumn{1}{c|}{1} & 0     \\ \hline
	 		$0$  & $\emptyset$  & $0$       & $T_{\{n\}}$  &  \multicolumn{1}{c|}{1} & 2    \\ \hline
	 		$1$  & $\cdot$  	& $1$       & $T_{\{n+1\}}$  & \multicolumn{1}{c|}{2} & 1     \\ \hline
	 	\end{tabular}
	 	\caption{Extended definition of $T_{\{n\}} \stackrel{k}{\cdot - \cdot}$ and $P_{k}$ for $-3 \le k \le 1$. \label{extendedgrafting}}
	 \end{table}
	By definition,
	\begin{align*}
		\mathcal{G}\left(T_{\{n\}} \stackrel{k}{\cdot - \cdot}\right) &= \mathrm{mex}\left(\left\{\mathcal{G}\left(P_{i-2}\right) \oplus \mathcal{G}\left(T_{\{n\}} \stackrel{k-1-i}{\cdot - \cdot}\right) : 1 \le i \le k+2 \right\}\right)\\
		&= \mathrm{mex}\left(\left\{\mathcal{G}\left(P_{i-2}\right) \oplus \mathcal{G}\left(T_{\{n-2\}} \stackrel{k-1-i}{\cdot - \cdot}\right): 1 \le i \le k+2 \right\}\right)\\
		&= \mathcal{G}\left(T_{\{n-2\}} \stackrel{k}{\cdot - \cdot}\right)
	\end{align*}
\end{proof}
\begin{corollary}
	For every positive integer $n,k$,
	\begin{align*}
		\mathcal{G}\left(T_{\{2n-1\}} \stackrel{k}{\cdot - \cdot}\right) = \mathcal{G}\left(T_{\{1\}} \stackrel{k}{\cdot - \cdot}\right) = \mathcal{G}\left(P_{k+2} \right).
	\end{align*}
	and
	\begin{align*}
		\mathcal{G}\left(T_{\{2n\}} \stackrel{k}{\cdot - \cdot}\right) = \mathcal{G}\left(T_{\{2\}} \stackrel{k}{\cdot - \cdot}\right).
	\end{align*}
\end{corollary}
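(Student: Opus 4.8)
The plan is to obtain this corollary as a purely iterated application of the preceding theorem, so no fresh game-theoretic computation is needed; the whole content is that the reduction $n \mapsto n-2$ leaves the Grundy value unchanged, whence the value depends only on the parity of the first index. First I would fix $k$ and argue by induction on $n$. For the odd case I aim to show $\mathcal{G}(T_{\{2n-1\}} \stackrel{k}{\cdot - \cdot}) = \mathcal{G}(T_{\{1\}} \stackrel{k}{\cdot - \cdot})$. When $n = 1$ this is an identity; for $n \ge 2$ the index satisfies $2n-1 \ge 3 \ge 2$, so the theorem applies and gives $\mathcal{G}(T_{\{2n-1\}} \stackrel{k}{\cdot - \cdot}) = \mathcal{G}(T_{\{2n-3\}} \stackrel{k}{\cdot - \cdot}) = \mathcal{G}(T_{\{2(n-1)-1\}} \stackrel{k}{\cdot - \cdot})$, which by the induction hypothesis equals $\mathcal{G}(T_{\{1\}} \stackrel{k}{\cdot - \cdot})$. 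The final equality $\mathcal{G}(T_{\{1\}} \stackrel{k}{\cdot - \cdot}) = \mathcal{G}(P_{k+2})$ is exactly the second Observation above, which I would simply cite to close the chain.

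For the even case I would run the same induction on $n$ toward the base $T_{\{2\}}$. When $n = 1$ the statement is again an identity; for $n \ge 2$ the index $2n \ge 4 \ge 2$ lets the theorem yield $\mathcal{G}(T_{\{2n\}} \stackrel{k}{\cdot - \cdot}) = \mathcal{G}(T_{\{2n-2\}} \stackrel{k}{\cdot - \cdot}) = \mathcal{G}(T_{\{2(n-1)\}} \stackrel{k}{\cdot - \cdot})$, and the induction hypothesis terminates the chain at $T_{\{2\}}$. Note that here there is no collapse to a path target, since the even base $T_{\{2\}}$ does not reduce to any $P_{k+2}$; this is why the two displayed equations in the statement have different right-hand sides.

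The only point requiring care is the hypothesis $n \ge 2$ in the theorem. Because subtracting $2$ preserves parity, the descending chains $2n-1, 2n-3, \dots$ and $2n, 2n-2, \dots$ terminate exactly at the legitimate bases $1$ and $2$ respectively, and every intermediate index visited is at least $2$, so each invocation of the theorem is valid and the recursion neither undershoots nor skips its base. Consequently I expect no genuine obstacle: the argument is a finite telescoping of equalities, and the only real work is the bookkeeping of parities and the verification that the base indices are reached precisely.
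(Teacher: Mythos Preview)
Your proposal is correct and follows exactly the approach implicit in the paper: the corollary is stated without proof immediately after the theorem and the two observations, precisely because it is obtained by iterating the reduction $n\mapsto n-2$ down to the parity representative and, in the odd case, invoking the observation $\mathcal{G}\!\left(T_{\{1\}}\stackrel{k}{\cdot - \cdot}\right)=\mathcal{G}(P_{k+2})$. Your bookkeeping about the hypothesis $n\ge 2$ and the termination of the descending chains at $1$ and $2$ is the only point that needed checking, and you handle it correctly.
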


\begin{theorem}\label{thm:t2k}
	The Grundy value sequence $\left(\mathcal{G}\left(T_{\{2\}} \stackrel{k}{\cdot - \cdot}\right) : k \in \mathbb{N}\right)$ is eventually periodic sequence with preperiod lenght $310$ and period $34$.
\end{theorem}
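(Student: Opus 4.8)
The plan is to combine the splitting recursion established in the proof of the previous theorem with the eventual periodicity of the ordinary path sequence (period $34$, preperiod $51$, coinciding with OEIS A002187), and then to promote the numerically observed periodicity of the graft sequence into a rigorous statement via a periodicity-transfer argument. Write $g(k) = \mathcal{G}\!\left(T_{\{2\}} \stackrel{k}{\cdot - \cdot}\right)$ and $p(n) = \mathcal{G}(P_n)$, with $p(n) = 0$ for $n \le 0$ and $g$ extended to $-3 \le k \le 1$ by Table \ref{extendedgrafting}. Reindexing the recursion of the previous theorem by the size $m = k-3-j$ of the graft surviving a move, it becomes
\begin{equation*}
  g(k) = \mathrm{mex}\{\,p(k-3-m) \oplus g(m) : -3 \le m \le k-2\,\}.
\end{equation*}
Every option thus splits the graph into a pure path (contributing a value of the eventually periodic sequence $p$) and a smaller graft (contributing $g(m)$); the whole difficulty is that $g(k)$ depends on \emph{all} earlier graft values simultaneously.

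First I would compute $g(k)$ directly from this recursion for $0 \le k \le N$ with $N$ a few hundred past $310$, and check that $g(k) = g(k+34)$ holds throughout $310 \le k \le N-34$ while $g(309) \ne g(343)$. This pins down the candidate preperiod $310$ and period $34$, exhibits the periodic block to be displayed, and supplies the base cases; one also verifies in passing that the values stay bounded (by $9$, as for $p$).

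The core step is a transfer lemma proved by strong induction: assuming $g(j)=g(j-34)$ for all $j<k$ with $j$ already in the periodic regime, I would show that the option sets of $g(k)$ and of $g(k-34)$ coincide, whence their mex values agree. Pairing the two option lists by the common graft index $m$, the contributions are $p(k-3-m)\oplus g(m)$ and $p(k-37-m)\oplus g(m)$. For $m$ in the bulk range $-3 \le m \le k-88$ these are literally equal, since the two path indices differ by exactly $34$ and $p$ is periodic on indices $\ge 51$ (here $k-37-m \ge 51$). Only the bounded \emph{top zone} $k-87 \le m \le k-2$ remains; for such $m$ the graft index lies in the periodic part of $g$, so the inductive hypothesis gives $g(m)=g(m-34)$ (respectively $g(m+34)=g(m)$), and the substitutions $m\mapsto m-34$ and $m\mapsto m+34$ carry each leftover option of $g(k)$ to one of $g(k-34)$ and conversely. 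Crucially, each such shift moves the companion path index by $34$ as well, so the $p$-factor is unchanged by periodicity. Hence the two option sets are identical and $g(k)=g(k-34)$; together with the base cases this yields $g(k)=g(k+34)$ for every $k\ge 310$, and the computed block shows the period is exactly $34$ (no proper divisor) and the preperiod exactly $310$ (as $g(309)\neq g(343)$).

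The main obstacle is precisely this top zone. Its width, roughly $\pi+\ell_p+c = 34+51+3$, is dictated by the path preperiod $\ell_p=51$ and the offset $c=3$ in the recursion; this is what fixes the length of the finite verification window and, more fundamentally, explains why the graft preperiod $310$ so greatly exceeds the path preperiod $51$: one must wait until the graft index attached to \emph{every} surviving path option has itself entered the periodic regime before the shift-by-$34$ matching can close up. Carrying out the bookkeeping for the finitely many boundary options—those whose path index still lies in $p$'s preperiod, or whose graft index still lies in $g$'s preperiod—is the delicate part. Since the induction only needs equality of option \emph{sets}, no separate boundedness estimate is required inside it, and the exact constants $310$ and $34$ are certified by the finite computation described above.
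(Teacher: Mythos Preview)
Your approach is essentially identical to the paper's: compute a finite initial segment from the splitting recursion, then run a strong induction in which the option set is split into a ``bulk'' range handled by the known period-$34$ periodicity of $p$ and a bounded ``top zone'' handled by the inductive hypothesis on $g$, so that the option sets of $g(k)$ and $g(k-34)$ coincide. Two small numerical slips to correct when you carry this out: the graft values are not bounded by $9$ (they reach $15$ in the computed table), and the witness for the exact preperiod is $g(310)\neq g(344)$ rather than $g(309)\neq g(343)$.
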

\begin{proof}
	Let $T(k) = \mathcal{G}\left(T_{\{2\}} \stackrel{k}{\cdot - \cdot}\right)$ and $P(k) = \mathcal{G}\left(P_{k}\right)$ be the Grundy value of the game $T_{\{2\}} \stackrel{k}{\cdot - \cdot}$ and $P_k$, respectively.By definition,
	\begin{equation*}
		T(k) = \mathrm{mex}\left(\left\{P(i-2) \oplus T(k-1-i): 1 \le i \le k+2 \right\}\right)
	\end{equation*} 
	With the initial conditions in Table \ref{extendedgrafting}, we can compute the values of sequence $\left(T(k) : 1 \le k \le 441 \right)$, which can be seen in Table \ref{T2k}. The sequence is periodic with period $34$ when $311 \le k \le 441$. Assume that for some $k \ge 442$, $T(i) = T(i-34)$ for all $345 \le i < k$. Then
	\begin{align*}
		T(k) &= \mathrm{mex}\left(\left\{P(i-2) \oplus T(k-1-i): 1 \le i \le k+2 \right\}\right)\\
		&= \mathrm{mex}(\left\{P(i-2) \oplus T(k-1-i): 1 \le i \le k-346 \right\} \\
		&\hspace{1.5cm} \cup \left\{P(i-2) \oplus T(k-1-i): k-345 \le i \le k+2 \right\})\\
		&= \mathrm{mex}(\left\{P(i-2) \oplus T(k-35-i): 1 \le i \le k-346 \right\} \\
		&\hspace{1.5cm} \cup \left\{P(i-36) \oplus T(k-1-i): k-345 \le i \le k+2 \right\})\\
		&= \mathrm{mex}\left(\left\{P(i-2) \oplus T(k-35-i): 1 \le i \le k-32 \right\}\right)\\
		&= T(k-34),
	\end{align*}
	which completes our induction.
	\begin{figure}
		\centering
		\tikzset{every picture/.style={line width=0.75pt}} %set default line width to 0.75pt        
		
		\begin{tikzpicture}[x=0.75pt,y=0.75pt,yscale=-1,xscale=1]
			%uncomment if require: \path (0,300); %set diagram left start at 0, and has height of 300
			
			%Rounded Rect [id:dp25707695535793806] 
			\draw  [fill={rgb, 255:red, 248; green, 231; blue, 28 }  ,fill opacity=0.4 ] (85,87.89) .. controls (85,81) and (90,77) .. (95,77) -- (210,77) .. controls (220,77) and (220,81.87) .. (220,87.89) -- (220,120) .. controls (220,131) and (210,131) .. (210,131) -- (96,131) .. controls (90,131) and (85,131) .. (85,120.54) -- cycle ;
			%Rounded Rect [id:dp37355640029143056] 
			\draw  [fill={rgb, 255:red, 194; green, 194; blue, 194 }  ,fill opacity=0.4 ] (220,87.89) .. controls (220,81) and (225,77) .. (230,77) -- (445,77) .. controls (455,77) and (455,81.87) .. (455,87.89) -- (455,120.54) .. controls (455,126.56) and (455,131) .. (445,131) -- (230,131) .. controls (225,131) and (220,131) .. (220,120.54) -- cycle ;
			
			%Rounded Rect [id:dp9249686119689714] 
			\draw  [fill={rgb, 255:red, 248; green, 231; blue, 28 }  ,fill opacity=0.4 ] (85,188.89) .. controls (85,182.87) and (90,178) .. (96,178) -- (300,178) .. controls (315,178) and (315,182.87) .. (315,188.89) -- (315,221.54) .. controls (315,227.56) and (315,232.43) .. (300,232.43) -- (96.16,232.43) .. controls (90.15,232.43) and (85.27,227.56) .. (85.27,221.54) -- cycle ;
			%Rounded Rect [id:dp8112235307502413] 
			\draw  [fill={rgb, 255:red, 194; green, 194; blue, 194 }  ,fill opacity=0.4 ] (220,188.89) .. controls (220,182.87) and (225,178) .. (230,178) -- (445,178) .. controls (455,178) and (455,182.87) .. (455,188.89) -- (455,221.54) .. controls (455,227.56) and (455,232.43) .. (445,232.43) -- (230,232.43) .. controls (225,232.43) and (220,227.56) .. (220,221.54) -- cycle ;
			
			%Straight Lines [id:da965057370909005] 
			\draw    (142.27,133.43) -- (153.71,172.51) ;
			\draw [shift={(154.27,174.43)}, rotate = 253.69] [color={rgb, 255:red, 0; green, 0; blue, 0 }  ][line width=0.75]    (10.93,-3.29) .. controls (6.95,-1.4) and (3.31,-0.3) .. (0,0) .. controls (3.31,0.3) and (6.95,1.4) .. (10.93,3.29)   ;
			%Straight Lines [id:da39918184970711434] 
			\draw    (339.27,134.43) -- (331.66,173) ;
			\draw [shift={(331.27,174.96)}, rotate = 281.17] [color={rgb, 255:red, 0; green, 0; blue, 0 }  ][line width=0.75]    (10.93,-3.29) .. controls (6.95,-1.4) and (3.31,-0.3) .. (0,0) .. controls (3.31,0.3) and (6.95,1.4) .. (10.93,3.29)   ;
			
			% Text Node
			\draw (71,160) node [anchor=north west][inner sep=0.75pt]   [align=left] {$k=466$};
			% Text Node
			\draw (92,185) node [anchor=north west][inner sep=0.75pt]   [align=left] {$\ \ 464 \ 463 \ \ \cdots \ \ 345 \ \ 344 \ \ \cdots \ \ 311 \ \ 310 \ \ \cdots \ -2 \ \ \ -3$};
			% Text Node
			\draw (99,210) node [anchor=north west][inner sep=0.75pt]   [align=left] {$-1 \ \ \ \ 0 \ \ \ \ \cdots \ \ 118 \ \ 119 \ \ \cdots \ \ 152 \ \ 153 \ \ \cdots \ \ 465 \ \ \ 466$};
			% Text Node
			\draw (462,185) node [anchor=north west][inner sep=0.75pt]   [align=left] {$T(i)$};
			% Text Node
			\draw (462,210) node [anchor=north west][inner sep=0.75pt]   [align=left] {$P(i)$};
			% Text Node
			\draw (71,56) node [anchor=north west][inner sep=0.75pt]   [align=left] {$k=500$};
			% Text Node
			\draw (92,85) node [anchor=north west][inner sep=0.75pt]   [align=left] {$\ \ 498 \ 497 \ \ \cdots \ \ 345 \ \ 344 \ \ \cdots \ \ 311 \ \ 310 \ \ \cdots \ -2 \ \ \ -3$};
			% Text Node
			\draw (99,110) node [anchor=north west][inner sep=0.75pt]   [align=left] {$-1 \ \ \ \ 0 \ \ \ \ \cdots \ \ 152 \ \ 153 \ \ \cdots \ \ 186 \ \ 187 \ \ \cdots \ \ 499 \ \ \ 500$};
			% Text Node
			\draw (462,84) node [anchor=north west][inner sep=0.75pt]   [align=left] {$T(i)$};
			% Text Node
			\draw (462,109) node [anchor=north west][inner sep=0.75pt]   [align=left] {$P(i)$};
			% Text Node
			\draw (152,141) node [anchor=north west][inner sep=0.75pt]   [align=left] {$T(i) = T(i-34)$};
			% Text Node
			\draw (339,146) node [anchor=north west][inner sep=0.75pt]   [align=left] {$P(i) = P(i-34)$};
		\end{tikzpicture}
		\caption{Proof of Theorem}
	\end{figure}
\end{proof}

\begin{theorem}
	For every positive integers $m,n$, let $a = 2 - \left(m \mod 2\right)$ and $b = 2 - \left(n \mod 2 \right)$; thus $a=1$ if $m$ is odd and $2$ if $m$ is even (similarly for $b$). Then
	\begin{equation*}
		\mathcal{G}\left(T_{\{m\}} \stackrel{k}{\cdot - \cdot} T_{\{n\}}\right) = \mathcal{G}\left(T_{\{a\}} \stackrel{k}{\cdot - \cdot} T_{\{b\}}\right).
	\end{equation*}
\end{theorem}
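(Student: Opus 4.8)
The plan is to reduce the two-sided statement to the one-sided reduction already proven, by observing that \emph{every} legal move on $T_{\{m\}} \stackrel{k}{\cdot - \cdot} T_{\{n\}}$ separates the two trees, so that each option is a disjoint union of a \emph{left} one-sided grafting $T_{\{m\}}\stackrel{\,\cdot\,}{\cdot - \cdot}$ and a \emph{right} one-sided grafting $T_{\{n\}}\stackrel{\,\cdot\,}{\cdot - \cdot}$. First I would fix coordinates, writing the connecting path as $r_1 = p_0, p_1, \dots, p_{k-1}, p_k = r_2$, with $r_1$ carrying its $m$ leaves and $r_2$ its $n$ leaves, and then classify the moves. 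A move at a left leaf deletes $r_1$, leaving $m-1$ isolated vertices together with the right grafting on the surviving path; a move at $r_1$ deletes $r_1$, $p_1$ and the $m$ leaves; a move at an internal vertex $p_i$ deletes the consecutive triple $p_{i-1},p_i,p_{i+1}$; and symmetrically on the right. In every case the residual graph is $\bigl(T_{\{m\}}\stackrel{i-2}{\cdot - \cdot}\bigr) \cup \bigl(T_{\{n\}}\stackrel{k-i-2}{\cdot - \cdot}\bigr)$, where the extended definitions of Table \ref{extendedgrafting} (superscripts $-3,-2,-1$) encode the consumption of a root or of a single leaf.

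This bookkeeping yields the single recursion
\[
	\mathcal{G}\left(T_{\{m\}} \stackrel{k}{\cdot - \cdot} T_{\{n\}}\right) = \mathrm{mex}\left\{\mathcal{G}\left(T_{\{m\}}\stackrel{i-2}{\cdot - \cdot}\right) \oplus \mathcal{G}\left(T_{\{n\}}\stackrel{k-i-2}{\cdot - \cdot}\right) : -1 \le i \le k+1\right\},
\]
where the range $-1 \le i \le k+1$ is precisely the requirement that both superscripts be at least $-3$, and each class of symmetric leaves contributes a single option value. The crucial feature, and the reason the argument is short, is that the right-hand side involves only \emph{one-sided} graftings, with no self-reference to a two-sided grafting; hence no induction on $k$ is required. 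I would then invoke the preceding Corollary, $\mathcal{G}\bigl(T_{\{m\}}\stackrel{j}{\cdot - \cdot}\bigr) = \mathcal{G}\bigl(T_{\{a\}}\stackrel{j}{\cdot - \cdot}\bigr)$ and $\mathcal{G}\bigl(T_{\{n\}}\stackrel{j}{\cdot - \cdot}\bigr) = \mathcal{G}\bigl(T_{\{b\}}\stackrel{j}{\cdot - \cdot}\bigr)$, to replace $m$ by $a$ and $n$ by $b$ term by term; since each summand in the mex set is unchanged, so is the mex, giving the claim.

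One gap to close carefully is that the Corollary is stated only for positive superscripts, whereas the recursion also uses the extended values with superscripts $-3,-2,-1$. I would verify directly from Table \ref{extendedgrafting} that these extended values depend on $m$ (respectively $n$) only through its parity: the entries are $(m-1)\bmod 2$, $0$, and $m\bmod 2$, all of which are preserved under $m \mapsto m-2$. Thus the parity reduction is valid for every index appearing in the recursion, and the substitution $(m,n)\mapsto(a,b)$ leaves the entire mex set fixed.

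The step I expect to be the main obstacle is establishing the recursion rigorously, that is, verifying that every move genuinely separates the two trees and that the stated index range captures all moves exactly once, together with checking the small-$k$ boundary cases in which a single move deletes both a root and an adjacent path vertex (so that one side collapses to the $\emptyset$ or isolated-vertex entries of Table \ref{extendedgrafting}). Once the recursion and the extended-value parity check are in place, the term-by-term substitution and the conclusion $\mathcal{G}\bigl(T_{\{m\}}\stackrel{k}{\cdot - \cdot}T_{\{n\}}\bigr) = \mathcal{G}\bigl(T_{\{a\}}\stackrel{k}{\cdot - \cdot}T_{\{b\}}\bigr)$ are immediate.
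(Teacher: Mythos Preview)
Your proposal is correct and follows exactly the paper's approach: express $\mathcal{G}\bigl(T_{\{m\}}\stackrel{k}{\cdot-\cdot}T_{\{n\}}\bigr)$ as a mex over products of one-sided graftings (your parametrization $-1\le i\le k+1$ with superscripts $(i-2,\,k-i-2)$ is the paper's $1\le i\le k+3$ with $(i-4,\,k-i)$ after the shift $i\mapsto i+2$), then apply the one-sided parity reduction termwise. Your explicit verification that the extended values at superscripts $-3,-2,-1$ depend only on the parity of $m$ and $n$ is a point the paper relies on tacitly but does not spell out, so in that respect your argument is slightly more complete.
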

\begin{proof}
	By definition,
	\begin{align*}
		\mathcal{G}\left(T_{\{m\}} \stackrel{k}{\cdot - \cdot} T_{\{n\}}\right) &= \mathrm{mex}\left(\left\{\mathcal{G}\left(T_{\{m\}} \stackrel{i-4}{\cdot - \cdot}\right) \oplus \mathcal{G}\left(T_{\{n\}} \stackrel{k-i}{\cdot - \cdot}\right) : 1 \le i \le k+3 \right\}\right)\\
		&= \mathrm{mex}\left(\left\{\mathcal{G}\left(T_{\{a\}} \stackrel{i-4}{\cdot - \cdot}\right) \oplus \mathcal{G}\left(T_{\{b\}} \stackrel{k-i}{\cdot - \cdot}\right) : 1 \le i \le k+3 \right\}\right)\\
		&= \mathcal{G}\left(T_{\{a\}} \stackrel{k}{\cdot - \cdot} T_{\{b\}}\right).
	\end{align*}
\end{proof}
\begin{corollary}
	For every positive integers $m,n,k$, 
	\begin{align*}
		\mathcal{G}\left(T_{\{2m-1\}} \stackrel{k}{\cdot - \cdot} T_{\{2n-1\}}\right) &= \mathcal{G}\left(T_{\{1\}} \stackrel{k}{\cdot - \cdot} T_{\{1\}}\right) = \mathcal{G}\left(P_{k+3} \right),\\
		\mathcal{G}\left(T_{\{2m\}} \stackrel{k}{\cdot - \cdot} T_{\{2n-1\}}\right) &= \mathcal{G}\left(T_{\{2\}} \stackrel{k}{\cdot - \cdot} T_{\{1\}}\right) = \mathcal{G}\left(T_{\{2\}} \stackrel{k+1}{\cdot - \cdot} \right)\\
		 \intertext{\hspace{1cm} and}
		\mathcal{G}\left(T_{\{2m\}} \stackrel{k}{\cdot - \cdot} T_{\{2n\}}\right) &= \mathcal{G}\left(T_{\{2\}} \stackrel{k}{\cdot - \cdot} T_{\{2\}}\right).
	\end{align*}
\end{corollary}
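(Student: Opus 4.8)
The plan is to obtain the three left-hand equalities as immediate specializations of the previous theorem, and then to settle the two right-hand equalities by recognizing the relevant graftings as concrete, familiar graphs. The guiding principle throughout is that the Node-Kayles Grundy value is an isomorphism invariant of the underlying (unlabeled) graph, so whenever two graftings yield isomorphic graphs their Grundy values coincide.

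First I would treat the leftmost equality in each of the three lines. With $a = 2 - (m \bmod 2)$ and $b = 2 - (n \bmod 2)$ as in the previous theorem, odd indices give value $1$ and even indices give value $2$. Substituting $m \mapsto 2m-1,\,2m$ and $n \mapsto 2n-1,\,2n$ therefore yields $(a,b) = (1,1)$, $(2,1)$, and $(2,2)$ respectively, which produce exactly
\[
\mathcal{G}\!\left(T_{\{1\}} \stackrel{k}{\cdot - \cdot} T_{\{1\}}\right),\quad
\mathcal{G}\!\left(T_{\{2\}} \stackrel{k}{\cdot - \cdot} T_{\{1\}}\right),\quad
\mathcal{G}\!\left(T_{\{2\}} \stackrel{k}{\cdot - \cdot} T_{\{2\}}\right).
\]
No computation beyond reading off parities is needed here, so these equalities are essentially automatic from the previous theorem.

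Next I would establish the two remaining equalities as graph isomorphisms. Since $T_{\{1\}}$ is the rooted path $P_2$ (its root carries a single pendant leaf), grafting two copies by identifying each root with an end of $P_{k+1}$ attaches one leaf to each end of the connecting path; the resulting graph is a single path $\ell_1 - r_1 - \cdots - r_2 - \ell_2$ on $(k+1)+2 = k+3$ vertices, hence it is $P_{k+3}$. For the middle line, the right factor $T_{\{1\}} = P_2$ contributes only one extra pendant vertex past the end $r_2$ of the connecting path; thus $T_{\{2\}} \stackrel{k}{\cdot - \cdot} T_{\{1\}}$ is $T_{\{2\}}$ carrying a single pendant path of length $k+1$ at its root, which is precisely the one-sided grafting $T_{\{2\}} \stackrel{k+1}{\cdot - \cdot}$ (whose terminal single vertex plays the role of that last leaf). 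The third line requires nothing further, as its right-hand side is already in reduced form.

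The only place demanding care—and thus the main obstacle, modest as it is—is the off-by-one bookkeeping in the path lengths: the connecting path $P_{k+1}$ contributes $k$ edges between the two roots, the pendant leaf of $T_{\{1\}}$ supplies one more, and I must verify that this total matches the pendant path of $T_{\{2\}} \stackrel{k+1}{\cdot - \cdot}$ rather than that of $T_{\{2\}} \stackrel{k}{\cdot - \cdot}$. Once the isomorphisms $T_{\{1\}} \stackrel{k}{\cdot - \cdot} T_{\{1\}} \cong P_{k+3}$ and $T_{\{2\}} \stackrel{k}{\cdot - \cdot} T_{\{1\}} \cong T_{\{2\}} \stackrel{k+1}{\cdot - \cdot}$ are confirmed, invariance of the Grundy value under graph isomorphism immediately delivers the stated equalities.
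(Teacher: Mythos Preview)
Your argument is correct and matches the paper's intent: the corollary is stated there without proof, as an immediate consequence of the preceding theorem together with the structural observations that $T_{\{1\}} \stackrel{k}{\cdot - \cdot} T_{\{1\}}\cong P_{k+3}$ and $T_{\{2\}} \stackrel{k}{\cdot - \cdot} T_{\{1\}}\cong T_{\{2\}} \stackrel{k+1}{\cdot - \cdot}$. Your explicit verification of the off-by-one bookkeeping is exactly the check the paper leaves implicit.
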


\begin{theorem}
	The Grundy value sequence $\left(\mathcal{G}\left(T_{\{2\}} \stackrel{k}{\cdot - \cdot} T_{\{2\}}\right) : k \in \mathbb{N}\right)$ is eventually periodic sequence with preperiod lenght $640$ and period $34$.
\end{theorem}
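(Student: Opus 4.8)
The plan is to reduce the whole problem to the single-grafting sequence $T(k):=\mathcal{G}\!\left(T_{\{2\}} \stackrel{k}{\cdot - \cdot}\right)$, whose eventual periodicity is already Theorem \ref{thm:t2k}: it has preperiod $310$ and period $34$, so $T(j)=T(j-34)$ for every $j\ge 345$. Writing $S(k):=\mathcal{G}\!\left(T_{\{2\}} \stackrel{k}{\cdot - \cdot} T_{\{2\}}\right)$ and specializing the grafting recurrence of the preceding theorem to $m=n=2$, I would begin from
\begin{equation*}
	S(k)=\mathrm{mex}\left(\left\{T(i-4)\oplus T(k-i) : 1\le i\le k+3\right\}\right),
\end{equation*}
with the extended conventions of Table \ref{extendedgrafting} covering the small arguments. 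The decisive structural feature is that the right-hand side involves only the single-grafting values $T(\cdot)$ and never $S$ itself: any move on the dumbbell $T_{\{2\}} \stackrel{k}{\cdot - \cdot} T_{\{2\}}$ deletes one connected closed neighborhood and leaves a disjoint union of two single-graftings. Consequently $S$ is computed pointwise from $T$ with no self-reference, and, unlike the genuinely self-referential induction of Theorem \ref{thm:t2k}, its periodicity will be inherited directly from that of $T$.

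Since each $S(k)$ can be evaluated from the known periodic values of $T$ alone, I would first tabulate $S(k)$ for $k$ in a window reaching up to just below the analytic threshold derived below, say $606\le k\le 692$. From this table one reads off that $S(k)=S(k-34)$ holds throughout $641\le k\le 692$ while $S(640)\ne S(606)$; this is exactly what pins the preperiod at $640$ and exhibits the period of length $34$.

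The analytic heart is to prove $S(k)=S(k-34)$ for all $k\ge 693$ by showing that the two $\mathrm{mex}$-defining sets are literally equal. I would split the index range $[1,k+3]$ as $[1,k-345]\cup[k-344,k+3]$. On the first block $k-i\ge 345$, so periodicity of $T$ gives $T(k-i)=T(k-34-i)$ and the term becomes $T(i-4)\oplus T(k-34-i)$. On the second block the hypothesis $k\ge 693$ guarantees $i\ge k-344\ge 349$, hence $i-4\ge 345$ and $T(i-4)=T(i-38)$; reindexing $i\mapsto i-34$ rewrites this block as $\left\{T(i'-4)\oplus T(k-34-i') : k-378\le i'\le k-31\right\}$. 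Because $[1,k-345]\cup[k-378,k-31]=[1,k-31]$, the union is precisely the $\mathrm{mex}$-defining set of $S(k-34)$, so $S(k)=S(k-34)$. Combining this tail with the computed window $641\le k\le 692$ yields $S(k)=S(k-34)$ for all $k\ge 641$, and together with $S(640)\ne S(606)$ the preperiod is exactly $640$ and the period is $34$.

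The main obstacle is the crossing region of indices $i$ for which neither $i-4$ nor $k-i$ has yet reached the periodic part of $T$, namely $k-344\le i\le 348$. This region is empty precisely when $k\ge 693$, which is why the shift manipulation certifies periodicity only from $k=693$ onward. Closing the residual window $641\le k\le 692$, where the two $\mathrm{mex}$-defining sets need not coincide termwise even though the resulting Grundy values do, falls outside the reach of the algebraic argument and must be settled by the explicit computation; it is this computational input, not the shift argument, that fixes the precise value $640$ of the preperiod. A secondary but necessary check is the index bookkeeping for the shifts by $4$ and by $34$, ensuring that the two reindexed blocks tile $[1,k-31]$ with neither gaps nor omitted terms.
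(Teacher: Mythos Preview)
Your proposal is correct and follows essentially the same approach as the paper: express $S(k)$ via the recurrence $S(k)=\mathrm{mex}\{T(i-4)\oplus T(k-i):1\le i\le k+3\}$, split the index range so that in one block $T(k-i)$ can be shifted by $34$ and in the other $T(i-4)$ can be, and verify that the reindexed union reproduces the defining set for $S(k-34)$; the remaining finite window is handled by direct computation. Your analytic threshold $k\ge 693$ is in fact sharper than the paper's $k\ge 748$ (the paper simply computes further than necessary), and you make explicit the check $S(640)\ne S(606)$ that pins the preperiod, which the paper leaves implicit in its table.
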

\begin{proof}
	Let $T(k) = \mathcal{G}\left(T_{\{2\}} \stackrel{k}{\cdot - \cdot}\right)$ and $T_2(k) = \mathcal{G}\left(T_{\{2\}} \stackrel{k}{\cdot - \cdot} T_{\{2\}}\right)$be the Grundy value of the game $\left(T_{\{2\}} \stackrel{k}{\cdot - \cdot}\right)$ and $\left(T_{\{2\}} \stackrel{k}{\cdot - \cdot} T_{\{2\}}\right)$, respectively. By definition,
	\begin{equation*}
		T_2(k) = \mathrm{mex}\left(\left\{T(i-4) \oplus T(k-i): 1 \le i \le k+3 \right\}\right)
	\end{equation*}
	With the initial conditions in Table \ref{extendedgrafting}, we can compute the values of sequence $\left(T_2(k): 1 \le k \le 747 \right)$ which can be seen in Table \ref{T2k2}. The sequence is periodic with period $34$ when $641 \le k \le 747$. From Theorem \ref{thm:t2k}, $T(i) = T(i-34)$ for all $i \ge 345$. Then for $k \ge 748$,
	\begin{align*}
		T_2(k) &= \mathrm{mex}\left(\left\{T(i-4) \oplus T(k-i): 1 \le i \le k+3 \right\}\right)\\
		&= \mathrm{mex}(\left\{T(i-4) \oplus T(k-i): 1 \le i \le k-345 \right\}\\
		&\hspace{1.5cm} \cup \left\{T(i-4) \oplus T(k-i): k-344 \le i \le k+3 \right\})\\
		&= \mathrm{mex}(\left\{T(i-4) \oplus T(k-i-34): 1 \le i \le k-345 \right\}\\
		&\hspace{1.5cm} \cup \left\{T(i-38) \oplus T(k-i): k-344 \le i \le k+3 \right\})\\
		&= \mathrm{mex}\left(\left\{T(i-4) \oplus T(k-i-34): 1 \le i \le k-31 \right\}\right)\\
		&= T_{2}(k-34).
	\end{align*}
	which completes our induction.
	\begin{figure}
		\centering
		\tikzset{every picture/.style={line width=0.75pt}} %set default line width to 0.75pt        
		
		\begin{tikzpicture}[x=0.75pt,y=0.75pt,yscale=-1,xscale=1]
			%uncomment if require: \path (0,300); %set diagram left start at 0, and has height of 300
			
			%Rounded Rect [id:dp25707695535793806] 
			\draw  [fill={rgb, 255:red, 248; green, 231; blue, 28 }  ,fill opacity=0.4 ] (85,87.89) .. controls (85,81) and (90,77) .. (95,77) -- (210,77) .. controls (220,77) and (220,81.87) .. (220,87.89) -- (220,120) .. controls (220,131) and (210,131) .. (210,131) -- (96,131) .. controls (90,131) and (85,131) .. (85,120.54) -- cycle ;
			%Rounded Rect [id:dp37355640029143056] 
			\draw  [fill={rgb, 255:red, 194; green, 194; blue, 194 }  ,fill opacity=0.4 ] (220,87.89) .. controls (220,81) and (225,77) .. (230,77) -- (445,77) .. controls (455,77) and (455,81.87) .. (455,87.89) -- (455,120.54) .. controls (455,126.56) and (455,131) .. (445,131) -- (230,131) .. controls (225,131) and (220,131) .. (220,120.54) -- cycle ;
			
			%Rounded Rect [id:dp9249686119689714] 
			\draw  [fill={rgb, 255:red, 248; green, 231; blue, 28 }  ,fill opacity=0.4 ] (85,188.89) .. controls (85,182.87) and (90,178) .. (96,178) -- (300,178) .. controls (315,178) and (315,182.87) .. (315,188.89) -- (315,221.54) .. controls (315,227.56) and (315,232.43) .. (300,232.43) -- (96.16,232.43) .. controls (90.15,232.43) and (85.27,227.56) .. (85.27,221.54) -- cycle ;
			%Rounded Rect [id:dp8112235307502413] 
			\draw  [fill={rgb, 255:red, 194; green, 194; blue, 194 }  ,fill opacity=0.4 ] (220,188.89) .. controls (220,182.87) and (225,178) .. (230,178) -- (445,178) .. controls (455,178) and (455,182.87) .. (455,188.89) -- (455,221.54) .. controls (455,227.56) and (455,232.43) .. (445,232.43) -- (230,232.43) .. controls (225,232.43) and (220,227.56) .. (220,221.54) -- cycle ;
			
			%Straight Lines [id:da965057370909005] 
			\draw    (142.27,133) -- (153.71,172) ;
			\draw [shift={(154.27,174.43)}, rotate = 253.69] [color={rgb, 255:red, 0; green, 0; blue, 0 }  ][line width=0.75]    (10.93,-3.29) .. controls (6.95,-1.4) and (3.31,-0.3) .. (0,0) .. controls (3.31,0.3) and (6.95,1.4) .. (10.93,3.29)   ;
			%Straight Lines [id:da39918184970711434] 
			\draw    (339.27,133) -- (331.66,172) ;
			\draw [shift={(331.27,174.96)}, rotate = 281.17] [color={rgb, 255:red, 0; green, 0; blue, 0 }  ][line width=0.75]    (10.93,-3.29) .. controls (6.95,-1.4) and (3.31,-0.3) .. (0,0) .. controls (3.31,0.3) and (6.95,1.4) .. (10.93,3.29)   ;
			
			% Text Node
			\draw (71,160) node [anchor=north west][inner sep=0.75pt]   [align=left] {$k=716$};
			% Text Node
			\draw (92,185) node [anchor=north west][inner sep=0.75pt]   [align=left] {$\ \ 715 \ 714 \ \ \cdots \ \ 345 \ \ 344 \ \ \cdots \ \ 311 \ \ 310 \ \ \cdots \ -2 \ \ \ -3$};
			% Text Node
			\draw (99,210) node [anchor=north west][inner sep=0.75pt]   [align=left] {$-3 \ -2 \ \ \cdots \ \ 367 \ \ 368 \ \ \cdots \ \ 401 \ \ 402 \ \ \cdots \ \ 714 \ \ \ 715$};
			% Text Node
			\draw (462,185) node [anchor=north west][inner sep=0.75pt]   [align=left] {$T(k-i)$};
			% Text Node
			\draw (462,210) node [anchor=north west][inner sep=0.75pt]   [align=left] {$T(i-4)$};
			% Text Node
			\draw (71,56) node [anchor=north west][inner sep=0.75pt]   [align=left] {$k=750$};
			% Text Node
			\draw (92,85) node [anchor=north west][inner sep=0.75pt]   [align=left] {$\ \ 749 \ 748 \ \ \cdots \ \ 345 \ \ 344 \ \ \cdots \ \ 311 \ \ 310 \ \ \cdots \ -2 \ \ \ -3$};
			% Text Node
			\draw (99,110) node [anchor=north west][inner sep=0.75pt]   [align=left] {$-3 \ -2 \ \ \cdots \ \ 401 \ \ 402 \ \ \cdots \ \ 435 \ \ 436 \ \ \cdots \ \ 748 \ \ \ 749$};
			% Text Node
			\draw (462,84) node [anchor=north west][inner sep=0.75pt]   [align=left] {$T(k-i)$};
			% Text Node
			\draw (462,109) node [anchor=north west][inner sep=0.75pt]   [align=left] {$T(i-4)$};
			% Text Node
			\draw (152,141) node [anchor=north west][inner sep=0.75pt]   [align=left] {$T(k-i) = T(k-i-34)$};
			% Text Node
			\draw (339,141) node [anchor=north west][inner sep=0.75pt]   [align=left] {$T(i-4) = T(i-38)$};
		\end{tikzpicture}
		\caption{Proof of Theorem}
	\end{figure}
\end{proof}
\section*{Acknowledgement} 
Write support, acknowledgment, dedicatory, and grants here.

%\section*{References} 
\bibliographystyle{abbrv} % We choose the "plain" reference style
\bibliography{refs} % Entries are in the refs.bib file

\begin{landscape}
	\begin{table}[]
		\centering
		\resizebox{20cm}{!}{
		\begin{tabular}{c|cccccccccccccccccccccccccccccccccc}
			& 0 & 1 & 2 & 3 & 4 & 5 & 6 & 7 & 8 & 9 & 10 & 11 & 12 & 13 & 14 & 15 & 16 & 17         & 18 & 19 & 20 & 21 & 22 & 23 & 24 & 25 & 26 & 27 & 28 & 29 & 30 & 31 & 32 & 33 \\ \hline
			0  & 0 & 1 & 1 & 2 & 0 & 3 & 1 & 1 & 0 & 3 & 3  & 2  & 2  & 4  & 0  & 5  & 2  & 2          & 3  & 3  & 0  & 1  & 1  & 3  & 0  & 2  & 1  & 1  & 0  & 4  & 5  & 2  & 7  & 4  \\
			1  & 0 & 1 & 1 & 2 & 0 & 3 & 1 & 1 & 0 & 3 & 3  & 2  & 2  & 4  & 4  & 5  & 5  & \boxed{\textbf{2}} & 3  & 3  & 0  & 1  & 1  & 3  & 0  & 2  & 1  & 1  & 0  & 4  & 5  & 3  & 7  & 4  \\
			2  & 8 & 1 & 1 & 2 & 0 & 3 & 1 & 1 & 0 & 3 & 3  & 2  & 2  & 4  & 4  & 5  & 5  & 9          & 3  & 3  & 0  & 1  & 1  & 3  & 0  & 2  & 1  & 1  & 0  & 4  & 5  & 3  & 7  & 4  \\
			3  & 8 & 1 & 1 & 2 & 0 & 3 & 1 & 1 & 0 & 3 & 3  & 2  & 2  & 4  & 4  & 5  & 5  & 9          & 3  & 3  & 0  & 1  & 1  & 3  & 0  & 2  & 1  & 1  & 0  & 4  & 5  & 3  & 7  & 4  \\
			4  & 8 & 1 & 1 & 2 & 0 & 3 & 1 & 1 & 0 & 3 & 3  & 2  & 2  & 4  & 4  & 5  & 5  & 9          & 3  & 3  & 0  & 1  & 1  & 3  & 0  & 2  & 1  & 1  & 0  & 4  & 5  & 3  & 7  & 4  \\
			5  & 8 & 1 & 1 & 2 & 0 & 3 & 1 & 1 & 0 & 3 & 3  & 2  & 2  & 4  & 4  & 5  & 5  & 9          & 3  & 3  & 0  & 1  & 1  & 3  & 0  & 2  & 1  & 1  & 0  & 4  & 5  & 3  & 7  & 4  \\
			6  & 8 & 1 & 1 & 2 & 0 & 3 & 1 & 1 & 0 & 3 & 3  & 2  & 2  & 4  & 4  & 5  & 5  & 9          & 3  & 3  & 0  & 1  & 1  & 3  & 0  & 2  & 1  & 1  & 0  & 4  & 5  & 3  & 7  & 4  \\
			7  & 8 & 1 & 1 & 2 & 0 & 3 & 1 & 1 & 0 & 3 & 3  & 2  & 2  & 4  & 4  & 5  & 5  & 9          & 3  & 3  & 0  & 1  & 1  & 3  & 0  & 2  & 1  & 1  & 0  & 4  & 5  & 3  & 7  & 4  \\
			8  & 8 & 1 & 1 & 2 & 0 & 3 & 1 & 1 & 0 & 3 & 3  & 2  & 2  & 4  & 4  & 5  & 5  & 9          & 3  & 3  & 0  & 1  & 1  & 3  & 0  & 2  & 1  & 1  & 0  & 4  & 5  & 3  & 7  & 4  \\
			9  & 8 & 1 & 1 & 2 & 0 & 3 & 1 & 1 & 0 & 3 & 3  & 2  & 2  & 4  & 4  & 5  & 5  & 9          & 3  & 3  & 0  & 1  & 1  & 3  & 0  & 2  & 1  & 1  & 0  & 4  & 5  & 3  & 7  & 4  \\
			10 & 8 & 1 & 1 & 2 & 0 & 3 & 1 & 1 & 0 & 3 & 3  & 2  & 2  & 4  & 4  & 5  & 5  & 9          & 3  & 3  & 0  & 1  & 1  & 3  & 0  & 2  & 1  & 1  & 0  & 4  & 5  & 3  & 7  & 4  \\
			11 & 8 & 1 & 1 & 2 & 0 & 3 & 1 & 1 & 0 & 3 & 3  & 2  & 2  & 4  & 4  & 5  & 5  & 9          & 3  & 3  & 0  & 1  & 1  & 3  & 0  & 2  & 1  & 1  & 0  & 4  & 5  & 3  & 7  & 4  \\
			12 & 8 & 1 & 1 & 2 & 0 & 3 & 1 & 1 & 0 & 3 & 3  & 2  & 2  & 4  & 4  & 5  & 5  & 9          & 3  & 3  & 0  & 1  & 1  & 3  & 0  & 2  & 1  & 1  & 0  & 4  & 5  & 3  & 7  & 4 
		\end{tabular}}
		\caption{Grundy value of $P_k$ \label{Pk}}
	\end{table}
	\begin{table}[]
		\centering
		\resizebox{20cm}{!}{
			\begin{tabular}{c|cccccccccccccccccccccccccccccccccc}
				& 0 & 1 & 2 & 3  & 4  & 5 & 6 & 7 & 8 & 9  & 10 & 11 & 12 & 13 & 14 & 15 & 16 & 17 & 18 & 19 & 20 & 21 & 22 & 23 & 24 & 25 & 26 & 27 & 28 & 29 & 30 & 31 & 32 & 33 \\ \hline
				0  & 2 & 1 & 3 & 0  & 0  & 1 & 1 & 4 & 0 & 5  & 1  & 1  & 0  & 0  & 3  & 1  & 2  & 0  & 0  & 1  & 2  & 2  & 3  & 3  & 5  & 2  & 4  & 3  & 3  & 2  & 2  & 1  & 0  & 0  \\
				1  & 2 & 1 & 3 & 0  & 0  & 1 & 7 & 4 & 4 & 6  & 5  & 7  & 0  & 0  & 3  & 1  & 2  & 0  & 0  & 1  & 2  & 2  & 4  & 3  & 5  & 6  & 4  & 7  & 3  & 6  & 2  & 1  & 0  & 0  \\
				2  & 2 & 1 & 3 & 0  & 8  & 1 & 9 & 4 & 2 & 8  & 5  & 9  & 4  & 0  & 3  & 1  & 2  & 0  & 0  & 1  & 2  & 2  & 4  & 8  & 5  & 6  & 4  & 7  & 8  & 6  & 9  & 1  & 0  & 0  \\
				3  & 2 & 1 & 3 & 0  & 8  & 1 & 9 & 4 & 4 & 8  & 5  & 9  & 4  & 0  & 8  & 1  & 2  & 0  & 0  & 1  & 2  & 2  & 4  & 8  & 5  & 9  & 4  & 12 & 8  & 6  & 9  & 9  & 0  & 0  \\
				4  & 2 & 1 & 3 & 0  & 8  & 1 & 9 & 4 & 4 & 14 & 5  & 13 & 4  & 0  & 8  & 1  & 2  & 0  & 0  & 1  & 2  & 2  & 4  & 8  & 5  & 9  & 4  & 12 & 8  & 6  & 9  & 9  & 0  & 0  \\
				5  & 2 & 1 & 3 & 0  & 8  & 1 & 9 & 4 & 4 & 14 & 5  & 13 & 4  & 0  & 8  & 1  & 2  & 4  & 0  & 1  & 2  & 2  & 4  & 8  & 5  & 9  & 4  & 12 & 8  & 6  & 9  & 9  & 0  & 8  \\
				6  & 2 & 9 & 3 & 0  & 8  & 1 & 9 & 4 & 4 & 14 & 5  & 13 & 4  & 0  & 8  & 1  & 2  & 4  & 0  & 1  & 2  & 2  & 4  & 8  & 5  & 9  & 4  & 12 & 8  & 6  & 9  & 9  & 0  & 8  \\
				7  & 2 & 9 & 3 & 0  & 8  & 1 & 9 & 4 & 4 & 14 & 5  & 13 & 4  & 0  & 8  & 1  & 2  & 4  & 0  & 1  & 2  & 2  & 4  & 8  & 5  & 9  & 4  & 12 & 8  & 6  & 9  & 9  & 0  & 8  \\
				8  & 2 & 9 & 3 & 0  & 8  & 1 & 9 & 4 & 4 & 14 & 5  & 13 & 4  & 0  & 8  & 1  & 2  & 4  & 0  & 5  & 2  & 2  & 4  & 8  & 5  & 9  & 4  & 12 & 8  & 6  & 9  & 9  & 0  & 8  \\
				9  & 2 & 9 & 3 & 15 & \boxed{\textbf{8}}  & 1 & 9 & 4 & 4 & 14 & 5  & 13 & 4  & 0  & 8  & 1  & 2  & 4  & 8  & 5  & 13 & 2  & 4  & 8  & 5  & 9  & 4  & 12 & 8  & 6  & 9  & 9  & 0  & 8  \\
				10 & 2 & 9 & 3 & 15 & 14 & 1 & 9 & 4 & 4 & 14 & 5  & 13 & 4  & 0  & 8  & 1  & 2  & 4  & 8  & 5  & 13 & 2  & 4  & 8  & 5  & 9  & 4  & 12 & 8  & 6  & 9  & 9  & 0  & 8  \\
				11 & 2 & 9 & 3 & 15 & 14 & 1 & 9 & 4 & 4 & 14 & 5  & 13 & 4  & 0  & 8  & 1  & 2  & 4  & 8  & 5  & 13 & 2  & 4  & 8  & 5  & 9  & 4  & 12 & 8  & 6  & 9  & 9  & 0  & 8  \\
				12 & 2 & 9 & 3 & 15 & 14 & 1 & 9 & 4 & 4 & 14 & 5  & 13 & 4  & 0  & 8  & 1  & 2  & 4  & 8  & 5  & 13 & 2  & 4  & 8  & 5  & 9  & 4  & 12 & 8  & 6  & 9  & 9  & 0  & 8 
		\end{tabular}}
		\caption{Grundy value of $T_{\{2\}}\stackrel{k}{\cdot - \cdot}$ \label{T2k}}
	\end{table}
	\begin{table}[]
		\centering
		\resizebox{20cm}{!}{
		\begin{tabular}{c|cccccccccccccccccccccccccccccccccc}
			& 0 & 1 & 2 & 3 & 4  & 5 & 6  & 7 & 8  & 9 & 10 & 11 & 12 & 13 & 14 & 15 & 16 & 17 & 18 & 19 & 20 & 21 & 22 & 23 & 24 & 25 & 26 & 27 & 28         & 29 & 30 & 31 & 32 & 33 \\ \hline
			0  & 2 & 1 & 1 & 0 & 2  & 2 & 2  & 3 & 3  & 0 & 1  & 1  & 3  & 0  & 2  & 1  & 1  & 0  & 5  & 5  & 2  & 4  & 4  & 0  & 1  & 1  & 2  & 0  & 3          & 1  & 1  & 0  & 3  & 3  \\
			1  & 2 & 2 & 4 & 3 & 5  & 2 & 2  & 3 & 3  & 0 & 1  & 1  & 3  & 0  & 2  & 1  & 1  & 0  & 6  & 5  & 3  & 4  & 4  & 6  & 1  & 1  & 2  & 0  & 3          & 1  & 1  & 0  & 3  & 3  \\
			2  & 5 & 2 & 4 & 7 & 5  & 6 & 4  & 4 & 8  & 0 & 1  & 1  & 3  & 0  & 2  & 1  & 7  & 0  & 6  & 5  & 3  & 4  & 4  & 6  & 5  & 5  & 2  & 0  & 3          & 1  & 1  & 0  & 8  & 3  \\
			3  & 5 & 4 & 4 & 4 & 5  & 5 & 4  & 4 & 8  & 0 & 6  & 1  & 3  & 0  & 2  & 1  & 7  & 0  & 6  & 5  & 3  & 7  & 4  & 6  & 1  & 5  & 2  & 0  & 3          & 1  & 1  & 0  & 12 & 3  \\
			4  & 5 & 4 & 4 & 7 & 5  & 6 & 4  & 7 & 8  & 0 & 6  & 1  & 3  & 0  & 2  & 1  & 7  & 7  & 6  & 5  & 3  & 4  & 4  & 6  & 5  & 5  & 2  & 0  & 3          & 1  & 1  & 4  & 12 & 3  \\
			5  & 5 & 4 & 4 & 7 & 5  & 6 & 4  & 7 & 8  & 0 & 6  & 1  & 3  & 0  & 2  & 1  & 7  & 7  & 6  & 5  & 3  & 4  & 4  & 6  & 5  & 5  & 2  & 4  & 3          & 5  & 9  & 4  & 12 & 3  \\
			6  & 5 & 6 & 4 & 4 & 5  & 5 & 16 & 7 & 15 & 0 & 6  & 7  & 3  & 8  & 2  & 1  & 7  & 7  & 6  & 11 & 3  & 7  & 4  & 6  & 13 & 5  & 2  & 4  & 3          & 5  & 9  & 4  & 12 & 3  \\
			7  & 5 & 6 & 4 & 4 & 11 & 5 & 16 & 7 & 15 & 6 & 6  & 7  & 3  & 8  & 2  & 14 & 7  & 7  & 6  & 11 & 3  & 7  & 4  & 6  & 13 & 5  & 2  & 4  & 3          & 13 & 9  & 4  & 12 & 3  \\
			8  & 5 & 6 & 4 & 4 & 11 & 5 & 16 & 7 & 15 & 6 & 6  & 7  & 3  & 3  & 2  & 14 & 7  & 7  & 6  & 11 & 3  & 7  & 4  & 6  & 13 & 5  & 2  & 4  & 3          & 13 & 9  & 4  & 4  & 3  \\
			9  & 5 & 6 & 4 & 4 & 11 & 5 & 16 & 7 & 15 & 6 & 6  & 7  & 3  & 3  & 2  & 14 & 7  & 7  & 6  & 11 & 3  & 7  & 4  & 6  & 13 & 5  & 2  & 4  & 3          & 13 & 9  & 4  & 4  & 3  \\
			10 & 5 & 6 & 4 & 4 & 11 & 5 & 16 & 7 & 15 & 6 & 6  & 7  & 3  & 3  & 2  & 14 & 7  & 7  & 6  & 11 & 3  & 7  & 4  & 6  & 13 & 5  & 2  & 4  & 3          & 13 & 9  & 4  & 4  & 3  \\
			11 & 5 & 6 & 4 & 4 & 11 & 5 & 16 & 7 & 15 & 6 & 6  & 9  & 3  & 3  & 2  & 2  & 15 & 7  & 12 & 14 & 3  & 7  & 4  & 6  & 13 & 5  & 2  & 4  & 3          & 13 & 9  & 4  & 4  & 3  \\
			12 & 5 & 6 & 4 & 4 & 11 & 5 & 16 & 7 & 15 & 6 & 6  & 9  & 3  & 3  & 2  & 2  & 15 & 7  & 12 & 14 & 3  & 15 & 4  & 6  & 13 & 5  & 2  & 4  & 3          & 13 & 9  & 4  & 4  & 3  \\
			13 & 5 & 6 & 4 & 4 & 11 & 5 & 16 & 7 & 15 & 6 & 6  & 9  & 3  & 3  & 2  & 2  & 15 & 7  & 12 & 14 & 3  & 15 & 4  & 6  & 13 & 5  & 2  & 4  & 3          & 13 & 9  & 4  & 4  & 14 \\
			14 & 5 & 6 & 4 & 4 & 11 & 5 & 16 & 7 & 15 & 6 & 6  & 9  & 3  & 3  & 2  & 2  & 15 & 7  & 12 & 14 & 3  & 15 & 4  & 6  & 13 & 5  & 2  & 4  & 3          & 13 & 9  & 4  & 4  & 14 \\
			15 & 5 & 6 & 4 & 4 & 11 & 5 & 16 & 7 & 15 & 6 & 6  & 9  & 3  & 3  & 2  & 2  & 15 & 7  & 12 & 14 & 3  & 15 & 4  & 6  & 13 & 5  & 2  & 4  & 3          & 13 & 9  & 4  & 4  & 14 \\
			16 & 5 & 6 & 4 & 4 & 11 & 5 & 16 & 7 & 15 & 6 & 6  & 9  & 3  & 3  & 2  & 2  & 15 & 7  & 12 & 14 & 3  & 15 & 4  & 6  & 13 & 5  & 2  & 4  & 3          & 13 & 9  & 4  & 4  & 14 \\
			17 & 5 & 6 & 4 & 4 & 11 & 5 & 16 & 7 & 15 & 6 & 6  & 9  & 3  & 3  & 2  & 2  & 15 & 7  & 12 & 14 & 3  & 15 & 4  & 6  & 13 & 5  & 2  & 4  & 3          & 13 & 9  & 4  & 4  & 14 \\
			18 & 5 & 6 & 4 & 4 & 11 & 5 & 16 & 7 & 15 & 6 & 6  & 9  & 3  & 3  & 2  & 2  & 15 & 7  & 12 & 14 & 3  & 15 & 4  & 6  & 13 & 5  & 2  & 4  & \boxed{\textbf{3}} & 13 & 9  & 4  & 4  & 14 \\
			19 & 5 & 6 & 4 & 4 & 11 & 5 & 16 & 7 & 15 & 6 & 6  & 9  & 3  & 3  & 2  & 2  & 15 & 7  & 12 & 14 & 3  & 15 & 4  & 6  & 13 & 5  & 2  & 4  & 8          & 13 & 9  & 4  & 4  & 14 \\
			20 & 5 & 6 & 4 & 4 & 11 & 5 & 16 & 7 & 15 & 6 & 6  & 9  & 3  & 3  & 2  & 2  & 15 & 7  & 12 & 14 & 3  & 15 & 4  & 6  & 13 & 5  & 2  & 4  & 8          & 13 & 9  & 4  & 4  & 14 \\
			21 & 5 & 6 & 4 & 4 & 11 & 5 & 16 & 7 & 15 & 6 & 6  & 9  & 3  & 3  & 2  & 2  & 15 & 7  & 12 & 14 & 3  & 15 & 4  & 6  & 13 & 5  & 2  & 4  & 8          & 13 & 9  & 4  & 4  & 14
		\end{tabular}}
		\caption{Grundy value of $T_{\{2\}}\stackrel{k}{\cdot - \cdot} T_{\{2\}}$ \label{T2k2}}
	\end{table}

\end{landscape}

\end{document}